\documentclass{amsart}

\usepackage{amssymb}

\newtheorem{theorem}{Theorem}[section]
\newtheorem{proposition}[theorem]{Proposition}
\newtheorem{lemma}[theorem]{Lemma}
\newtheorem{corollary}[theorem]{Corollary}

\theoremstyle{definition}
\newtheorem{definition}[theorem]{Definition}

\theoremstyle{remark}
\newtheorem{remark}[theorem]{Remark}
\newtheorem{claim}{Claim}

\numberwithin{equation}{section}

\begin{document}

\title{Differentiability, porosity and doubling in metric measure spaces}

\author{David Bate}
\address{Mathematics Institute\\
Zeeman Building\\
University of Warwick\\
Coventry \\
CV4 7AL UK}
\curraddr{}
\email{D.S.Bate@Warwick.ac.uk}

\author{Gareth Speight}
\address{Mathematics Institute\\
Zeeman Building\\
University of Warwick\\
Coventry \\
CV4 7AL UK}
\curraddr{}
\email{G.Speight@Warwick.ac.uk}

\thanks{This work was done under the supervision of David Preiss and supported by EPSRC}

\date{\today}

\begin{abstract}
We show if a metric measure space admits a differentiable structure then porous sets have measure zero and hence the measure is pointwise doubling. We then give a construction to show if we only require an approximate differentiable structure the measure need no longer be pointwise doubling.
\end{abstract}

\maketitle

\section{Basic Definitions}
Let $(X,d)$ be a complete separable metric space equipped with a locally finite Borel measure $\mu$. We begin by recalling the notion of a differentiable structure on $(X,d,\mu)$ introduced by Cheeger \cite{Cheeger}. We call a metric measure space which admits such a structure a differentiability space and define a weaker notion of approximate differentiability space.

\begin{definition}
If $U \subset X$ is Borel measurable and $\varphi\colon X \to \mathbb{R}^{n}$ is Lipschitz we say $(U,\varphi)$ is a chart on $(X,d,\mu)$ of dimension $n \in \mathbb{N}$. A map $f\colon X \to \mathbb{R}$ is differentiable with respect to a chart $(U, \varphi)$ of dimension $n$ at $x_{0} \in U$ if there exists a unique derivative $df(x_{0}) \in \mathbb{R}^{n}$ (depending on the chart) such that
\[\limsup_{X\ni x \to x_{0}} \frac{ |f(x)-f(x_{0})-df(x_{0})\cdot (\varphi(x)-\varphi(x_{0}))|}{d(x,x_{0})}=0.\]

We say $(X,d,\mu)$ is a differentiability space if there exists a countable decomposition of $X$ into charts so that any Lipschitz $f\colon X\to \mathbb R$ is differentiable at almost every point of every chart.

We say $(X,d,\mu)$ is an approximate differentiability space if the above holds but with the limit replaced by an approximate limit. We say a function $g\colon X \to \mathbb{R}$ has approximate limit $l \in \mathbb{R}$ at $x_{0} \in X$ if for every $\varepsilon > 0$,
\[ \lim_{r\downarrow 0} \frac{\mu \{x \in B(x_{0},r): |g(x)-l|>\varepsilon \}}{\mu(B(x_{0},r))}=0.\]
We observe that since $(X,d)$ is separable for almost every $x_{0}$ we have $\mu(B(x_{0},r))>0$ for all $r>0$.
\end{definition}

Examples of differentiability spaces include Euclidean spaces, the Heisenberg group \cite{Cheeger} and Laakso spaces \cite{Laakso}. As Keith remarks, rectifiable sets in metric spaces are approximate differentiability spaces with Hausdorff measure \cite{Keith}. Keith has shown that if the measure $\mu$ on $X$ is globally doubling, namely balls have finite positive measure and there exists $C\geq 1$ such that $\mu(B(x,2r))\leq C\mu(B(x,r))$ for all $x \in X$ and $r>0$, then the notions of differentiability space and approximate differentiability space coincide \cite{Keith2}.

\begin{remark}
Note the fact that chart maps in differentiability spaces can be taken to be Lipschitz follows automatically from the definition of differentiability space, even if we don't insist they are Lipschitz in the definition of a chart.
\end{remark}

\begin{definition}
For $\eta>0$ we say that a set $S\subset X$ is $\eta$-\emph{porous} at $x_0\in S$ if there exist $x_n\to x_0$ with
\[d(x_n,S) > \eta d(x_n,x_0)\]
and that such an $x_n$ is a witness of porosity for $x_0$.  Further we call $S$ \emph{porous} if it is porous at each of its points and define for each $x_0\in S$
\[\eta(x_0)=\sup\{\eta : S \text{ is } 2\eta \text{ porous at } x_0\}\]
so that for every $x_0\in S$, $S$ is $\eta(x_0)$-porous at $x_0$. Finally, for $r>0$, we define the function $\sigma_r\colon S\to\mathbb R$ by
\[\sigma_r(x_0)=\sup\{d(x,S)<r : x \text{ is a } \eta(x_0) \text{-witness of porosity for } x_0\}.\]
\end{definition}

\begin{remark}
Observe that for a porous set $S$ and $C>0$, $\eta(x_0)> C$ if and only if there exists an $m\in\mathbb N$ and for every $n\in\mathbb N$ an $x\in X$ with $d(x,x_0)<1/n$ and 
\[d(x,S)>(2C+1/m)d(x,x_0).\]
This final condition is open in $x_0$ and so an arbitrary union over $x\in X$ followed by a countable intersection over $n$ and union over $m$ shows that $x_0\mapsto \eta(x_0)$ is a Borel function.  For any $r>0$ a similar decomposition shows that $\sigma_r$ is also Borel.
\end{remark}

\begin{definition}
We say $\mu$ is pointwise doubling at $x \in X$ if
\[\limsup_{r \downarrow 0} \frac{\mu (B(x,2r))}{\mu (B(x,r))} < \infty.\]
Note an equivalent condition is obtained if 2 is replaced by any other enlargement factor greater than one.
\end{definition}

We will see porous sets in differentiability spaces necessarily have measure zero and thus the underlying measure is pointwise doubling almost everywhere. We then show any approximate differentiability space which gives measure zero to porous sets is necessarily a differentiability space. Finally we show, by construction of an example, that the measure underlying an approximate differentiability space need not be pointwise doubling in a set of positive measure.

\section{Porous sets in differentiability spaces}
We begin with a characterisation of the uniqueness of derivatives with respect to a chart.

\begin{lemma}\label{lem:discuniq}
Let $(U,\varphi)$ be an $n$-dimensional chart in a metric measure space $(X,d,\mu)$ and $x_0\in U$.  Given $f\colon X\to \mathbb R$ a derivative of $f$ with respect to $(U,\varphi)$ at $x_0$ (if one exists) is unique if and only if there exists a $\lambda>0$ and $X\ni x_m\to x_0$ so that for any $v\in\mathbb R^n$
\begin{equation}\label{eq:uniqueness}\lim_{m\to\infty}\max_{0\leq i<n}\frac{|(\varphi(x_{mn+i})-\varphi(x_0))\cdot v|}{d(x_{mn+i},x_0)}\geq \lambda \|v\|.\end{equation}
In particular, uniqueness of derivatives at $x_0$ depends only on the chart and is independent of the function we differentiate.
\end{lemma}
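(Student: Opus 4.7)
I would prove the two directions separately. For the backward direction, assume the sequence $(x_m)$ and $\lambda$ exist. If $a,b\in\mathbb R^n$ both satisfy the derivative condition for some $f$, the triangle inequality forces
\[\limsup_{X\ni x\to x_0}\frac{|(a-b)\cdot(\varphi(x)-\varphi(x_0))|}{d(x,x_0)}=0.\]
Picking for each $m$ an index $i_m$ realising the maximum in (\ref{eq:uniqueness}) with $v=a-b$ produces a subsequence $x_{mn+i_m}\to x_0$ along which the ratio is at least $\lambda\|a-b\|/2$ for all large $m$, forcing $a=b$.

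For the forward direction, set
\[L(v):=\limsup_{X\ni x\to x_0}\frac{|v\cdot(\varphi(x)-\varphi(x_0))|}{d(x,x_0)}\qquad(v\in\mathbb R^n).\]
Then $L$ is positively homogeneous and Lipschitz in $v$ with constant $\operatorname{Lip}\varphi$. Uniqueness of the derivative is equivalent to $L(v)>0$ for every $v\neq 0$: two distinct derivatives $a,b$ would give $L(a-b)=0$, and conversely if $L(v)=0$ with $v\neq 0$ then $0$ and $v$ are both derivatives of $f=v\cdot\varphi$. This chart-only reformulation already yields the ``independent of $f$'' remark. By compactness of the unit sphere and continuity of $L$, set $\lambda_0:=\min_{\|v\|=1}L(v)>0$.

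The heart of the argument is then a Gram--Schmidt construction producing $n$ witnesses per block at arbitrarily small scale. For each $r>0$ I would inductively select $y_1,\ldots,y_n\in X$ with $d(y_i,x_0)<r$: at step $k$, take a unit vector $v_k$ orthogonal to $u(y_1),\ldots,u(y_{k-1})$, where $u(y):=(\varphi(y)-\varphi(x_0))/d(y,x_0)$; since $L(v_k)\geq\lambda_0$, one can pick $y_k$ with $d(y_k,x_0)<r$ and $|v_k\cdot u(y_k)|\geq\lambda_0/2$. This forces each Gram--Schmidt diagonal entry of the matrix $U=[u(y_1)\mid\cdots\mid u(y_n)]$ to be at least $\lambda_0/2$, so $|\det U|\geq(\lambda_0/2)^n$; combined with the uniform bound $\|u(y_i)\|\leq\operatorname{Lip}\varphi$, this gives $\sigma_{\min}(U)\geq\lambda'$ for some $\lambda'>0$ depending only on $n,\lambda_0,\operatorname{Lip}\varphi$. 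Hence $\max_i|v\cdot u(y_i)|\geq\|U^Tv\|_2/\sqrt n\geq(\lambda'/\sqrt n)\|v\|$ for every $v$, and concatenating the blocks obtained with $r=1/k$ produces the required sequence with $\lambda=\lambda'/\sqrt n$. The main obstacle is precisely this uniform-in-$v$ quantitative step: the merely pointwise bound $L(v)\geq\lambda_0$ must be upgraded to produce $n$ points per block whose images span $\mathbb R^n$ with controlled conditioning, and the inductive orthogonalisation against previously selected directions is what keeps the resulting $\lambda$ independent of the scale $r$.
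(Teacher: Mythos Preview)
Your proposal is correct. The backward direction and the reformulation of uniqueness as $L(v)>0$ for all $v\neq 0$ (hence a pure chart property) match the paper exactly, including the use of continuity and homogeneity in $v$.

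Where you diverge is in producing the sequence for the forward direction. The paper argues softly: the set of cluster points of $u(x)=(\varphi(x)-\varphi(x_0))/d(x,x_0)$ as $x\to x_0$ either spans $\mathbb R^n$ or it does not. If it does, pick $n$ cluster points $e_0,\ldots,e_{n-1}$ forming a basis, interleave sequences converging to them, and then the limit in \eqref{eq:uniqueness} exists and equals $\max_i|e_i\cdot v|$; positivity plus continuity on the sphere gives $\lambda$. If it does not, any $v^\perp$ orthogonal to the span witnesses non-uniqueness. Your route is instead constructive and quantitative: you first extract $\lambda_0=\min_{\|v\|=1}L(v)$ via sphere compactness, then at each scale run a Gram--Schmidt selection to force $|\det U|\geq(\lambda_0/2)^n$ and hence a uniform singular-value bound. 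This buys you an explicit $\lambda$ in terms of $n$, $\lambda_0$ and $\operatorname{Lip}\varphi$, at the cost of a longer argument; the paper's cluster-point approach is shorter and automatically makes the limit in \eqref{eq:uniqueness} exist. One small point: as written your concatenated sequence only guarantees $\liminf_m\max_i|u(x_{mn+i})\cdot v|\geq\lambda\|v\|$, since the block maxima need not converge. This is easily repaired by passing to a subsequence of blocks along which each $u(y_i^{(k)})$ converges (Bolzano--Weierstrass, the $u$'s being bounded by $\operatorname{Lip}\varphi$); after that the limit exists and the bound persists, which is exactly what the paper's basis-of-cluster-points picture gives directly.
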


\begin{remark}
Notice that if derivatives at $x_0$ are unique, then for any $f\colon X\to \mathbb R$ with derivative $df(x_0)$ at $x_0$ we have
\[\liminf_{m\to\infty}\max_{0\leq i<n}\frac{|f(x_{mn+i})-f(x_0)|}{d(x_{mn+i},x_0)}\geq \lambda\|df(x_0)\|.\]
\end{remark}

\begin{remark}
Note that any chart $(U,\varphi)$ with respect to which Lipschitz functions have a non-unique derivative almost everywhere may be decomposed into a finite union of new charts $(U_{i},\varphi_{i})$ with respect to which derivatives exist and are unique. The chart map $\varphi_{i}$ in each of these new charts is a subcollection (possibly empty) of the coordinates of the original map $\varphi$.
\end{remark}

\begin{proof}
First suppose that the conclusion holds and that a function $f\colon X\to \mathbb R$ has two derivatives $D$ and $D' \in \mathbb R^n$.  Then by the definition of derivative and the triangle inequality we see that
\[\limsup_{M\ni x\to x_0}\frac{|(\varphi(x)-\varphi(x_0))\cdot (D-D')|}{d(x,x_0)}=0\]
and so $\|D-D'\|=0$.

To show the converse observe that, since $\varphi$ is Lipschitz, the left hand side of \eqref{eq:uniqueness} is homogeneous and continuous in $v$, regardless of our choice of $x_m$.  Therefore it suffices to show the existence of a sequence so that \eqref{eq:uniqueness} gives positive values to non-zero $v$. For this we find a sequence $x_m$ so that for every $0\leq i < n$
\[\lim_{m\to\infty}\frac{\varphi(x_{mn+i})-\varphi(x_0)}{d(x_{mn+i},x_0)}\]
exists and form a basis of $\mathbb R^n$.

Suppose no such sequence exists.  Then for every sequence so that the above limits exist the limit vectors must belong to some $n-1$ dimensional subspace $V$ of $\mathbb R^n$.  Then for $v^\perp\neq 0$ orthogonal to $V$ we have
\[\limsup_{X\ni x\to x_0}\frac{|(\varphi(x)-\varphi(x_0))\cdot v^\perp|}{d(x,x_0)}=0\]
and so for any $f\colon X\to \mathbb R$ with derivative $df(x_0)$ at $x_0$, $df(x_0)+v^\perp$ is another derivative, contradicting our hypothesis.
\end{proof}

We may now prove our first result.

\begin{theorem}
Any porous set $S$ in a Lipschitz differentiability space is null.
\end{theorem}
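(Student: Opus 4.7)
The plan is to suppose $\mu(S)>0$ and reach a contradiction by applying differentiability to the $1$-Lipschitz distance function $f(x)=d(x,S)$, which vanishes on $S$ but is forced to grow linearly away from $S$ along porosity witnesses.

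First I would reduce the setup. Using the countable chart decomposition and restriction to positive-measure subsets, I can assume $S\subset U$ for some chart $(U,\varphi)$ of dimension $n$, with $\varphi$ being $L$-Lipschitz; by the remark following Lemma~\ref{lem:discuniq} I can assume derivatives with respect to $(U,\varphi)$ are unique at a.e.\ $x\in U$; and since $\eta$ is Borel, I can further assume $\eta(x)\geq\eta_0>0$ on $S$ for some fixed $\eta_0$. The function $f$ is then differentiable with some derivative $D(x_0)\in\mathbb R^n$ at a.e.\ $x_0\in U$, and in particular at a.e.\ $x_0\in S$. Porosity immediately produces a lower bound on $\|D(x_0)\|$: taking porosity witnesses $y_m\to x_0$ with $d(y_m,S)\geq\eta_0 d(y_m,x_0)$ gives $f(y_m)\geq\eta_0 d(y_m,x_0)$, and combining this with the differentiability estimate and $\|\varphi(y_m)-\varphi(x_0)\|\leq L\,d(y_m,x_0)$ yields $\|D(x_0)\|\geq\eta_0/L$.

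To complete the contradiction, I want to show the opposite: that $D(x_0)=0$ a.e.\ on $S$. Since $f$ vanishes on $S$, the differentiability estimate at $x_0\in S$ gives $D(x_0)\cdot(\varphi(z)-\varphi(x_0))=o(d(z,x_0))$ for every $z\in S$ approaching $x_0$. Equivalently, via Lemma~\ref{lem:discuniq} and its following remark, it suffices to find the spanning sequence $(x_m)$ entirely inside $S$, since then the lower bound
\[ \liminf_{m\to\infty}\max_{0\leq i<n}\frac{|f(x_{mn+i})|}{d(x_{mn+i},x_0)}\geq \lambda\|D(x_0)\| \]
would have a zero left-hand side, forcing $\|D(x_0)\|=0$.

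The main obstacle is thus arranging that the spanning sequence of Lemma~\ref{lem:discuniq} can be chosen within $S$ at a.e.\ $x_0\in S$, equivalently that the $\varphi$-directions realised by $S$-sequences span $\mathbb{R}^n$. I would attack this by induction on $n$ combined with dimension reduction: if the $S$-directions fail to span at a positive-measure subset, a measurable selection together with Lusin's theorem should produce $S'\subset S$ of positive measure on which these directions cluster near a fixed subspace $V\subsetneq\mathbb{R}^n$, and on $S'$ one would pass to a chart of strictly smaller dimension built from coordinates adapted to $V$, in which $S'$ is still porous. The delicate point, and the hardest step of the proof, is verifying that this smaller-dimensional chart genuinely inherits differentiability for arbitrary Lipschitz functions and not merely for $f$, which is exactly where Lemma~\ref{lem:discuniq}'s characterisation is designed to be applied. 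The base case $n=0$ is direct: differentiability in a $0$-dimensional chart forces $f(x)=o(d(x,x_0))$ at a.e.\ $x_0$, immediately contradicting the existence of porosity witnesses.
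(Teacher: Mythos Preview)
Your approach is the one the paper explicitly warns against in the remark following its proof: the distance function $d(\cdot,S)$ can itself be the chart map, so its being differentiable with nonzero derivative on $S$ is perfectly consistent with the chart structure. Thus the whole argument rests on Step 3, showing the spanning sequence of Lemma~\ref{lem:discuniq} can be chosen inside $S$ at a.e.\ point. That statement is exactly Corollary~2.8 (subsets inherit the differentiability structure), which the paper deduces \emph{from} the theorem; so using it to prove the theorem is circular, and your induction is meant to break this circularity.

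The induction does not close. In Case B you pass to a positive-measure $S'\subset S$ on which the $S$-directions lie in a proper subspace $V$. You then need a Lipschitz differentiability space carrying a chart of dimension $<n$ \emph{and} a porous subset of that space, to invoke the inductive hypothesis. If you restrict to $(S',d|_{S'},\mu|_{S'})$ with chart $\pi_V\circ\varphi|_{S'}$, you do get a lower-dimensional differentiability structure (any Lipschitz $g\colon S'\to\mathbb R$ extends to $X$, is differentiable there, and since the $S'$-directions lie in $V$ the derivative descends), but $S'$ is the entire ambient space and hence not porous in itself; the porosity witnesses for $S$ in $X$ lie outside $S'$ and are lost. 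If instead you keep $X$ as ambient space and try $(U,\pi_V\circ\varphi)$ as a new chart, differentiability fails for general Lipschitz functions because the $X$-directions at points of $S'$ still span all of $\mathbb R^n$ (by uniqueness in the original chart), so the $V^\perp$-component of the derivative cannot be discarded. Neither horn gives you both ingredients the inductive hypothesis requires.

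The paper avoids this circularity altogether. It takes the spanning sequence $(x_{mn+i})$ in $X$, partitions $X\setminus S$ into $n+1$ annular regions $R_0,\ldots,R_n$ according to the value of $d(\cdot,S)$, and builds $n+1$ Lipschitz functions $f_i$, each a supremum of cone functions supported in $R_i$. Since the $n$ points $x_{mn},\ldots,x_{mn+n-1}$ cannot meet all $n+1$ regions, pigeonhole gives a single $i$ and infinitely many $m$ for which all $n$ points miss $R_i$; along that subsequence $f_i$ vanishes, forcing $df_i(x_0)=0$, while the porosity witnesses in $R_i$ force $df_i(x_0)\neq 0$. This never needs the spanning sequence to lie in $S$.
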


\begin{proof}
It suffices to show that for an $n$-dimensional chart $(U,\varphi)$ any porous $S\subset U$ is null. Given such an $S$, $\sigma_r$ is a Borel function and so there exists a $S'\subset S$ with $\mu(S')\geq\mu(S)/2$ and a sequence $r_m\to 0$ monotonically so that for every $x_0\in S'$ there exist $\eta(x_0)$-witnesses $y_m$ with
\begin{equation}\label{eq:regions}r_{m+1}<d(y_m,S)/2<3d(y_m,S)/2 <r_m.\end{equation}
In particular for any $y\in B(y_m,d(y_m,S)/2)$ we have
\[r_{m+1}<d(y,S)<r_m.\]
We first partition these regions of $X\setminus S$ into $n+1$ cases that will allow us to define non-differentiable Lipschitz functions.  For $0\leq i <n+1$ let $R_i$ be the set of $x\in X$ with
\[d(x,S)\in \bigcup_{m\in\mathbb N}(r_{m(n+1)+i+1},r_{m(n+1)+i}).\]
Then, for $y\not\in S$, define the Lipschitz map $g_y\colon X\to \mathbb R$ by
\[g_y(x)=\max(d(y,S)/2-d(y,x),0);\]
a ``cone'' centred at $y$ of radius $d(y,S)/2$. Finally for $0\leq i< n+1$ define $f_i$ to be the 1-Lipschitz map
\[f_i=\sup\{g_y : y \text{ an } \eta(x_0) \text{-witness of some } x_0\in S',\ y \in R_i\}.\]
Note that by the above observation this satisfies $f_i(x)=0$ for any $x\not\in R_i$. We may now partition $S'$ into sets of non-differentiability for some $f_i$.

For almost every $x_0\in S'$ let $x_m\to x_0$ be as in the conclusion of lemma \ref{lem:discuniq} for $x_0$. Then by the pigeon-hole principle, there exists an $i$ and $m_k\to\infty$ so that for every $k\in\mathbb N$ and $0\leq j < n$ either $d(x_{m_kn+j},S)=0$ or $x_{m_kn+j}\not\in R_i$. Thus $f(x_{m_kn+j})=f(x_0)=0$ and so by lemma \ref{lem:discuniq}, $df_i(x_0)=0$ if it exists.

In addition, since $x_0\in S'$, we may find for any $m$ an $\eta(x_0)$-witnesses $x$ of $x_0$ satisfying \eqref{eq:regions} and so $\limsup |f(x)-f(x_0)|/d(x,x_0) \geq \eta(x_0)/2$.
This would require $df(x_0)\neq 0$ and so $f_i$ cannot be differentiable at $x_0$.

We have therefore decomposed $S'$ into a finite union of sets of non-differentiability and a set of non-uniqueness. Therefore $S'$ and hence $S$ are null.
\end{proof}

\begin{remark}
If $S \subset \mathbb{R}^{n}$ is porous then the distance function $d(x,S)$ provides a very easy example of a Lipschitz map differentiable nowhere on $S$. However in the more general setting such a function may be a coordinate chart and so the more involved construction given above is required.  To see this take $S\subset \mathbb R$ be a non-empty, closed null set.  We show that $\mathbb R$ with coordinate chart $d(x,S)$ is a Lipschitz differentiability space.

Let $(a,b)$ be a connected component of $\mathbb R\setminus S$, so that at most one of $a$ or $b$ is infinite. Then every $x_0\in((a+b)/2,b)$ has a neighbourhood where $d(x,S)-d(x_0,S)=x-x_0$ and so if a Lipschitz function $f$ has Euclidean derivative $df$ at $x_0$, $f$ also has derivative $df$ with respect to $d$ and vice versa.  Similarly every $x_0\in (a,(a+b)/2)$ has a neighbourhood where $d(x,S)-d(x_0,S)=-(x - x_0)$ and so such an $f$ has derivative $-df$ with respect to $d$. Thus any Lipschitz function has a unique derivative almost everywhere with respect to $d(x,S)$.
\end{remark}

\begin{corollary}
Theorem 3.6(iv) \cite{Preiss} gives a decomposition $X=P\cup S\cup N$ where $S$ is $\sigma$-porous, $N$ is null and $\mu$ is pointwise doubling at any point of $P$.  Thus a Lipschitz differentiability space is pointwise doubling almost everywhere. Thus, by the standard proof, $(X,d,\mu)$ is a Vitali space.
\end{corollary}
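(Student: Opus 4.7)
The plan is to apply the preceding theorem directly to the decomposition provided by Preiss. By Theorem 3.6(iv) of \cite{Preiss} we may write $X = P \cup S \cup N$ with $S$ a $\sigma$-porous set, $N$ a null set, and $\mu$ pointwise doubling at every point of $P$. Writing $S = \bigcup_{k\in\mathbb N} S_k$ with each $S_k$ porous, the preceding theorem shows that every $S_k$ is null, hence so is $S$ by countable subadditivity. Therefore $X\setminus P \subset S\cup N$ is $\mu$-null, which is precisely the statement that $\mu$ is pointwise doubling at almost every point of $X$.

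For the Vitali property, the plan is to invoke the standard covering argument, which I would outline as follows. Fix a Borel set $A\subset X$ and a Vitali cover $\mathcal V$ of $A$ by closed balls of arbitrarily small radii. Discarding a null set of non-doubling points, we may assume that at every $x\in A$ there is a constant $C(x)<\infty$ and a radius $r(x)>0$ with $\mu(B(x,5r))\leq C(x)\mu(B(x,r))$ for all $r<r(x)$. Partitioning $A$ according to the level sets of $C(x)$ and $r(x)$, it suffices to produce a disjoint Vitali-type selection on each piece. On such a piece a standard greedy $5r$-covering argument (choose balls of nearly maximal radius, remove those that meet previously chosen balls) produces a countable disjoint subfamily whose $5$-fold enlargement covers the piece, and the uniform doubling bound converts this into a disjoint subfamily covering the piece up to a null set. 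Summing over the pieces yields the Vitali property for $A$.

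The only point that could be viewed as an obstacle is checking that "pointwise doubling almost everywhere" really does suffice for the Vitali conclusion, rather than some stronger global or local uniform doubling. This is handled precisely by the partition into level sets of $C(x)$ and $r(x)$ above, which reduces the problem to the classical uniformly doubling case on each piece; since the exceptional set is null, it contributes nothing to the covering. Thus no new ideas beyond the theorem and the classical $5r$-covering lemma are needed.
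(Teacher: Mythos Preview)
Your proposal is correct and matches the paper's approach exactly: the paper gives no separate proof for this corollary, as the argument is already contained in the statement (apply the Preiss decomposition, use the preceding theorem to kill the $\sigma$-porous part, and invoke the standard Vitali argument). Your write-up simply expands the phrase ``by the standard proof'' into the usual partition-by-doubling-constant plus $5r$-covering argument, which is precisely what the paper is alluding to.
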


\begin{corollary}
Any subset $S$ of a Lipschitz differentiability space $X$ is also a Lipschitz differentiability space with respect to the charts obtained by restricting charts for $X$ to $S$.  Moreover, the derivative of any Lipschitz $f\colon S\to \mathbb R$ agrees with the derivative of any extension $f\colon X\to\mathbb R$ almost everywhere in $S$.
\end{corollary}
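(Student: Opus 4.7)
The plan is to use the McShane extension theorem to reduce differentiability of Lipschitz functions on $S$ to differentiability on $X$, and then to invoke the preceding theorem on porosity to transfer uniqueness to the restricted charts. Given a countable chart decomposition $\{(U_i,\varphi_i)\}$ of $X$, I would work with the restricted charts $\{(U_i\cap S, \varphi_i|_{U_i\cap S})\}$ on $(S,d,\mu|_S)$.

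For a Lipschitz $f\colon S\to\mathbb R$, let $\tilde f\colon X\to\mathbb R$ be a McShane extension. Then $\tilde f$ is differentiable at $\mu$-a.e.\ point of each $U_i$, and in particular at $\mu|_S$-a.e.\ point of each $U_i\cap S$. At such a point $x_0$ the limsup defining differentiability over $X\ni x\to x_0$ vanishes, and since restricting the approximating sequence to $S$ only removes terms, the corresponding $S$-limsup vanishes as well. Thus $d\tilde f(x_0)$ serves as a derivative of $f$ in the restricted chart, provided it is unique there.

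The main obstacle is uniqueness, and this is where the preceding theorem enters. The set of porous points of $S$ decomposes as $\bigcup_n\{x\in S: S \text{ is } (1/n)\text{-porous at } x\}$; each piece is porous at each of its points (witnesses of porosity for $S$ are automatically witnesses for any subset of $S$) and hence null by the preceding theorem. At $\mu|_S$-a.e.\ $x_0\in S$, then, $S$ is not porous at $x_0$ and the original chart $(U_i,\varphi_i)$ already gives unique derivatives at $x_0$, so Lemma \ref{lem:discuniq} supplies a sequence $x_m\in X$ satisfying \eqref{eq:uniqueness}. Non-porosity lets me pick, for any prescribed $\eta_m\downarrow 0$, points $x_m'\in S$ with $d(x_m',x_m)\leq \eta_m d(x_m,x_0)$; since $\varphi$ is Lipschitz, the normalized vectors $(\varphi(x_m')-\varphi(x_0))/d(x_m',x_0)$ agree with $(\varphi(x_m)-\varphi(x_0))/d(x_m,x_0)$ up to an $o(1)$ perturbation, so \eqref{eq:uniqueness} transfers to $(x_m')\subset S$ and Lemma \ref{lem:discuniq} delivers uniqueness for the restricted chart at $x_0$.

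For the final assertion, the same uniqueness argument shows independence of extension: if $\tilde f_1,\tilde f_2$ are two Lipschitz extensions of $f|_S$, both differentiable at such an $x_0$ with derivatives $D_1,D_2$, then $\tilde f_1-\tilde f_2$ vanishes on $S$, so $D_1-D_2$ serves as a derivative of the zero function with respect to the restricted chart at $x_0$ and must therefore vanish. This common value is the derivative of $f$ on the restricted chart, completing the proof.
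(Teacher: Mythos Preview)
Your argument is correct and follows essentially the same route as the paper: extend $f$ via McShane, inherit a candidate derivative from $X$, and use the porosity theorem together with Lemma~\ref{lem:discuniq} to handle uniqueness in the restricted chart. The only cosmetic difference is that the paper argues the contrapositive---it shows directly that any $x_0$ at which uniqueness fails in $S$ (but holds in $X$) is a point of porosity of $S$, whereas you first discard the $\sigma$-porous set of porosity points and then transfer the Lemma~\ref{lem:discuniq} sequence into $S$ by perturbation; both are the same idea.
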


\begin{proof}
Let $(U,\varphi)$ be a chart in $X$ and suppose that $S\subset U$.  Any Lipschitz map $f\colon S\to \mathbb{R}$ extends to a Lipschitz map on $X$ and so we have a candidate for the derivative of $f$ at almost every point of $S$; we must just check for uniqueness.  As seen above this is a property of $S$ and non-uniqueness occurs at any $x_0$ such that
\[\limsup_{S\ni x\to x_0}\frac{|(\varphi(x)-\varphi(x_{0}))\cdot D|}{d(x,x_0)}=0\]
for some $D\in \mathbb{R}^n$ with $D\neq 0$.  However for almost every such $x_0$ the uniqueness of derivatives in $X$ provides a sequence $x_m\to x_0$ in $X$ and a $\eta>0$ so that 
\[ \frac{|(\varphi(x_{m})-\varphi(x_{0}))\cdot D|}{d(x_{m},x_{0})}>\eta.\]
Thus, for sufficiently large $m$, the ball of radius $\eta\ d(x_{m},x_0)/2\operatorname{Lip}\varphi$ around $x_{m}$ does not belong to $S$ so that the set of such $x_0$ is porous and so null.  Thus $S$ is a Lipschitz differentiability space.
\end{proof}

Since any differentiability space is also an approximate differentiability space we see admitting an approximate differentiable structure and porous sets being null is a necessary condition for a space to admit a differentiable structure. The following proposition shows this condition is also sufficient.

\begin{proposition}
Suppose $(X,d,\mu)$ is a metric measure space in which porous sets have measure zero. Then for almost every $x_0 \in X$ we have that whenever $g \colon X\to \mathbb{R}$ is a Lipschitz function such that
\[\operatorname*{aplim}_{x\to x_0} g(x)/d(x,x_0)=0\]
then
\[\lim_{x\to x_0} g(x)/d(x,x_0)=0.\]
\end{proposition}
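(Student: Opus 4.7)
The strategy is to isolate a single measure-geometric property of $x_0$, independent of the Lipschitz function being tested, which (a) forces the ordinary limit to vanish whenever the approximate limit does, for \emph{every} Lipschitz $g$, and (b) holds on a set of full measure under the porous-sets-null hypothesis. The candidate property (G) at $x_0$ is that for every $\eta>0$,
\[\liminf_{x\to x_0}\frac{\mu(B(x,\eta\,d(x,x_0)))}{\mu(B(x_0,2\,d(x,x_0)))}>0.\]
Everything comes down to verifying (a) by a direct Lipschitz calculation and (b) by combining Preiss's decomposition with a porosity argument.

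For (a), fix an $L$-Lipschitz $g$ with approximate limit $0$ at a $G$-good $x_0$ and suppose for contradiction that there exist $x_n\to x_0$ and $\varepsilon>0$ with $|g(x_n)|/d(x_n,x_0)>\varepsilon$. Setting $c=\varepsilon/(4L)$ and $s_n=d(x_n,x_0)$, the Lipschitz estimate $|g(y)|\ge |g(x_n)|-L\cdot c s_n$ together with $d(y,x_0)\le (1+c)s_n$ shows that on $B(x_n,cs_n)$ the ratio $|g(y)|/d(y,x_0)$ exceeds some fixed $\varepsilon'=\varepsilon'(\varepsilon,L)>0$; moreover $B(x_n,cs_n)\subset B(x_0,2s_n)$. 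The approximate limit hypothesis at level $\varepsilon'$ then forces $\mu(B(x_n,cs_n))/\mu(B(x_0,2s_n))\to 0$, contradicting (G) with $\eta=c$.

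For (b), I first invoke the corollary already proved: porous-sets-null together with Preiss's decomposition yields that $\mu$ is pointwise doubling almost everywhere and that $X\setminus\bigcup_k D_k$ is contained in a $\sigma$-porous set, hence null, where $D_k=\{x:\mu(B(x,2r))\le k\mu(B(x,r))\text{ for all }r\le 1/k\}$. The density theorem (valid since $\mu$ is pointwise doubling almost everywhere) gives that almost every $x_0\in D_k$ is a density point of $D_k$. For such $x_0$, if a witness $x$ of failure of (G) lies in $D_k$, iterated doubling at $x_0$ and at $x$ (combined with the containment $B(x,2s)\supset B(x_0,s)$ when $s=d(x,x_0)$) produces a lower bound on $\mu(B(x,\eta d(x,x_0)))/\mu(B(x_0,2d(x,x_0)))$ of the form $k^{-O(\log(1/\eta))}$, a positive constant depending only on $\eta$ and $k$.

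The main obstacle is thus ruling out witnesses $x$ that fall outside every $D_k$. The plan here is to show that the set of $x_0$ for which the failure of (G) is caused by such ``bad'' witnesses at arbitrarily small scales is itself contained in a $\sigma$-porous set: each bad witness $x$, having $\mu(B(x,\eta d(x,x_0)))$ anomalously small relative to $\mu(B(x_0,2d(x,x_0)))$, must be close to $X\setminus\bigcup_k D_k$, and a careful scaling argument transfers the $\sigma$-porosity of $X\setminus\bigcup_k D_k$ up to the base points $x_0$. Applying the hypothesis that porous sets are null then shows the total exceptional set is null and completes the proof.
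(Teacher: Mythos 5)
Your plan correctly isolates the same sufficient geometric condition at $x_0$ that the paper identifies from an analysis of Keith's Proposition 3.5, namely (in your notation (G)) that for every $\eta>0$,
\[\liminf_{x\to x_0}\frac{\mu(B(x,\eta\,d(x,x_0)))}{\mu(B(x_0,2\,d(x,x_0)))}>0,\]
and your part (a) gives a clean, self-contained verification that (G) at $x_0$ upgrades an approximate limit to a genuine limit for every Lipschitz $g$. That part is correct and is a reasonable substitute for the paper's citation of Keith.

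The problem is part (b). The paper derives (G) a.e.\ by invoking Theorem~3.6 of \cite{Preiss}: when porous sets are null, the upper porosity of the measure satisfies $\overline{\mathrm{por}}(\mu,x)=0$ for almost every $x$, and this statement (after unwinding the definition of $\gamma(\mu,x,r,\delta)$ with $z=y$, $s=\eta\,d(y,x)$, $r=2d(y,x)$) is literally (G). You instead try to reconstruct this from the earlier corollary via the sets $D_k=\{x:\mu(B(x,2r))\le k\mu(B(x,r))\ \forall r\le 1/k\}$. The witnesses $x\in D_k$ you can indeed control by iterated doubling. But the case you label ``bad witnesses'' --- points $x$ near $x_0$ with $\mu(B(x,\eta\,d(x,x_0)))$ anomalously small and $x$ lying in no $D_k$ with controlled $k$ --- is exactly where the substance of Preiss's theorem lives, and your sketch does not handle it. The specific claim that such $x$ ``must be close to $X\setminus\bigcup_k D_k$'' is not justified: a witness $x$ may lie in $D_m$ for some very large $m$, which gives no useful bound and does not place $x$ near the non-doubling set, and since $B(x,\eta\,d(x,x_0))$ has tiny measure you cannot use density of $D_k$ at $x_0$ to find $D_k$-points inside it either. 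The promised ``careful scaling argument'' that would transfer porosity from the non-doubling set to the exceptional base points $x_0$ is not carried out, and I do not see how to carry it out without essentially reproducing the proof of \cite{Preiss}, Theorem~3.6. There is also a small misstatement: the decomposition $X=P\cup S\cup N$ shows $X\setminus\bigcup_k D_k\subset S\cup N$ with $S$ $\sigma$-porous and $N$ null --- so this set is null under the hypothesis, but it is not ``contained in a $\sigma$-porous set''. The fix is simply to replace your part (b) by the direct citation of the upper-porosity result, which makes the proposition a two-line consequence once (a) is in place.
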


\begin{proof}
In Proposition 3.5 \cite{Keith2} Keith proves a variant of this result under the different hypothesis that $\mu$ is globally doubling. An analysis of his proof shows the weaker condition that for almost every $x$ and all $\varepsilon > 0$
\[ \liminf_{y \to x} \frac{\mu(B(y,\varepsilon d(y,x)))}{\mu(B(x,2d(y,x)))}>0\]
suffices. As in \cite{Preiss} let $\gamma(\mu,x,r,\delta)$ be the supremum of the set
\[\{s>0: \exists z \text{ such that } d(x,z)+s\leq r \text{ and } \mu(B(z,s))\leq \delta \mu(B(x,r))\}\]
and define the upper porosity of $\mu$ at $x$ by
\[\overline{\mathrm{por}}(\mu,x)=\lim_{\delta \downarrow 0} \limsup_{r \downarrow 0} \frac{\gamma(\mu,x,r,\delta)}{r}.\]
By Theorem 3.6 \cite{Preiss} if porous sets have measure zero then $\overline{\mathrm{por}}(\mu,x)=0$ for almost every $x$. This immediately implies the condition which suffices for Keith's proof.
\end{proof}

\section{A non doubling approximate differentiability space}

It is easy to see the measure in approximate differentiability spaces need not give measure zero to porous sets. Indeed, $\mathbb{R}^{2}$ with one dimensional Hausdorff measure restricted to a line is an example of an approximate differentiability space which gives full measure to a porous set. We show, by construction of a counterexample, that the measure underlying an approximate differentiability space need not even be pointwise doubling. 

The space we construct is based on Laakso's spaces \cite{Laakso}. We start with a simple base space, in which distinct points are not connected by (continuous) paths, equipped with a non doubling measure. We then take the product of this space with an interval and Lebesgue measure. We identify different points in the base space at different heights on the interval so as to make the quotient space path connected and we equip it with the path metric. Since we cannot move horizontally inside the base space and have to travel vertically to move between points the quotient acquires a differentiable structure somewhat like Euclidean space. Due to the choice of identifications the product measure is still non doubling at almost every point.

For clarity we will work with a pseudometric, which satisfies the requirements of a metric except distances between distinct points may be $0$, rather than a metric on the quotient space. The following remark explains how these are equivalent.

\begin{remark}
Suppose a set $M$ is equipped with a pseudometric $d'$. Then we can define balls, Borel sets and so on in $(M,d')$ as in any metric space. Suppose $(M,d')$ is complete, separable and equipped with a locally finite Borel measure $\mu$. Then the notions of differentiability space and approximate differentiability space also make sense for $(M,d')$.

Define an equivalence relation on $(M,d')$ by $x \sim y$ if $d'(x,y)=0$. The set of equivalence classes $\widetilde{M}:=\{[x] : x \in M\}$ is a metric space with well defined distance $d([x],[y])=d'(x,y)$. Let $i \colon M \to \widetilde{M}$ denote the projection $x \mapsto [x]$. Notice $i(B_{d'}(x,r))=B_{d}([x],r)$ and $i^{-1}(B_{d}([x],r))=B_{d'}(x,r)$ for $x \in X$ and $r>0$. It is then easy to see that $(\widetilde{M},d)$ is complete and separable and $i_{\star}(\mu)$ is a locally finite Borel measure on $(\widetilde{M},d)$. If $\mu$ on $(M,d')$ is non pointwise doubling almost everywhere then $i_{\star}(\mu)$ on $(\widetilde{M},d)$ is also non pointwise doubling almost everywhere. Further, if $(M,d',\mu)$ is a (approximate) differentiability space then $(\widetilde{M},d,i_{\star}(\mu))$ is also a (approximate) differentiability space. This follows because Lipschitz functions on $(M,d')$ are constant on equivalence classes which implies, using uniqueness of derivatives in $(M,d',\mu)$, that the derivative of $f$ is constant on equivalence classes, hence gives rise to a function defined almost everywhere on $\widetilde{M}$.
\end{remark}

\begin{definition}
We define the set
\[M := \{ (a_{1}, a_{2}, a_{3}, \ldots ) : a_{i} \in \{1, \ldots, i \} \} \]
and equip $M$ with the metric $d_{M}(a,b)=1/2^k$ where $k=\min (i : a_{i}\neq b_{i})$.

If $k\geq 1$ we call a set of the form
\[ \{ (c_{1}, c_{2}, c_{3}, \ldots, c_{k}, a_{k+1}, a_{k+2}, \ldots ) : a_{i}\in \{1, \ldots, i\} \: \forall i \geq k+1 \}\]
for $c_{1}, c_{2}, \ldots c_{k}$ fixed an island of level $k$. Note there are $k!$ islands of level $k$. 

Notice $M$ is an ultrametric space. That is, the distance $d_{M}$ satisfies the strong triangle inequality $d_{M}(x,y)\leq \max (d_{M}(x,z),d_{M}(z,y))$ for all $x, y, z \in M$. This implies any two balls in $M$ are either disjoint or one is contained inside the other. Balls in $M$ are islands and the Borel sigma algebra on $M$ consists of countable unions of disjoint islands and points. Hence we can define a Borel probability measure $\nu$ on $M$ which gives the same measure $\nu_{k}=1/k!$ to any island of level $k$.
\end{definition}

Notice that if $a \in M$ then $B_{M}(a,1/2^{k})$ is the island of level $k$ containing $a$. Hence, for any $a \in M$,
\[ \nu(B_{M}(a,1/2^{k+1}))/\nu(B_{M}(a,1/2^k))=\nu_{k+1}/\nu_{k} \to 0\]
as $k \to \infty$ so the measure $\nu$ is not pointwise doubling. 

However, since any two balls are either disjoint or one is contained inside the other, $(M,d_{M},\nu)$ is a Vitali space and consequently the density theorem holds in $(M,d_{M},\nu)$. 

The space $M$ is compact. To see this note that given a sequence of elements of $M$ we can find a subsequence for which each coordinate is eventually constant. This immediately implies the subsequence converges.

Let $I=[0,1]$. We now introduce a pseudometric on $M \times I$ which gives rise to the quotient space described. 

\begin{definition}
For integer $k \geq 0$ define $I_{k}:=\{ n/2^{k} : n \in \mathbb{N}, \: 0\leq n \leq 2^k \} \subset I$. Define a pseudometric $d_{p}$ on $M \times I$ by 
\[d_{p}((x,t),(y,s)):= \inf \{ |t-u|+|u-s| : u \in I_{k},\: d_{M}(x,y) < 1/2^{k} \}.\]
We call subsets of the form $M \times \{ n/2^k \}$, for some $n \in \mathbb{N}$, jump levels of size $k$ and denote the open ball with centre $(x,t)$ and radius $r>0$ in $(M \times I, d_{p})$ by $B_{p}((x,t),r)$. The height of a point $(x,t)$ is its coordinate $t\in I$.
\end{definition}

Notice that geometrically the pseudometric $d_{p}$ corresponds to the shortest path joining points and moving only vertically, except at jump levels of size $k$ where we can jump horizontally inside islands of level $k$.

Balls $B_{p}((x,t),r)$, $(x,t) \in M \times I$ and $r>0$, are countable unions of overlapping rectangles. The rectangles in this union are centred on jump levels and are of the form
\[ B_{M}(x,1/2^{k}) \times (u-r+|t-u|,u+r-|t-u|)\]
where $u \in I_{k}$ and $|t-u|<r$. Notice this implies $\nu \times \mathcal{L}^{1}$ is a Borel measure on $(M \times I,d_{p})$.

We will also use the maximum metric $d_{\infty}((x,t),(y,s))=\max (d_{M}(x,y),|t-s|)$ on $M \times I$. We show $d_{p}((x,t),(y,s))\leq 3 d_{\infty}((x,t),(y,s))$ for all $(x,t), (y,s) \in M \times I$. Fix $(x,t)$ and $(y,s)$ in $M \times I$. If $x=y$ then the inequality is obvious as both distances are equal to $|t-s|$. Suppose not and $d_{M}(x,y)=1/2^{k+1}$ for some fixed $k$. Then $x$ and $y$ belong to the same island of level $k$ but distinct islands of level $k+1$. We can find $u \in I_{k}$ with $|t-u|\leq 1/2^{k+1}$. Hence,
\[ d_{p}((x,t),(y,s)) \leq 1/2^{k+1}+1/2^{k+1} + |s-t| \leq 3d_{\infty}(x,y).\]
Notice this implies $i\colon (M \times I,d_{\infty}) \to (M \times I, d_{p})$ is Lipschitz. Since $M$ and $I$ are separable, the collection of $d_{\infty}$ Borel sets is the smallest sigma algebra containing products of Borel sets in $M$ and $I$. 

Since $(M,d_{M})$ and $I$ are compact it follows the product space $(M\times I, d_{\infty})$ is also compact. Since $i\colon (M\times I, d_{\infty}) \to (M\times I, d_{p})$ is continuous and surjective we see $(M\times I, d_{p})$ is compact, hence complete and separable.

\section{Non pointwise doubling}

\begin{theorem}
The measure $\nu \times \mathcal{L}^{1}$ on $(M \times I, d_{p})$ is non pointwise doubling almost everywhere. Indeed, for almost every $(x,t) \in M \times I$,
\[ \limsup_{r \downarrow 0} \frac{\nu \times \mathcal{L}^{1} (B_{p}((x,t),4r)) }{\nu\times \mathcal{L}^{1} (B_{p}((x,t),r))} = \infty. \]
\end{theorem}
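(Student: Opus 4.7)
The plan is to slice the ball $B_{p}((x,t),r)$ along the $M$-coordinate. For $y$ with $d_{M}(x,y)=2^{-k^{*}}$, the condition $(y,s)\in B_{p}((x,t),r)$ reduces, by the nesting $I_{0}\subset I_{1}\subset\cdots$, to requiring that $s$ lie in
\[
L(r,N,t):=\bigcup_{u\in I_{N},\; |t-u|<r}\bigl(u-(r-|t-u|),\;u+(r-|t-u|)\bigr)
\]
with $N=k^{*}-1$. The annulus $\{y:d_{M}(x,y)=2^{-k^{*}}\}$ has $\nu$-measure $(k^{*}-1)/k^{*}!$, and since $\nu$ is non-atomic these annuli exhaust $M$ in $\nu$-measure, giving
\[
\nu\times\mathcal{L}^{1}(B_{p}((x,t),r)) = \sum_{N\geq 1}\frac{N}{(N+1)!}\,\mathcal{L}^{1}(L(r,N,t)).
\]

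Two bounds on this sum drive the argument. Writing $d_{N}(t):=d(t,I_{N})$ and $k^{*}(r):=\min\{N:d_{N}(t)<r\}$, one has $L(r,N,t)=\emptyset$ for $N<k^{*}(r)$ and $L(r,N,t)\subset(t-r,t+r)$ always; combined with the telescoping identity $\frac{N}{(N+1)!}=\frac{1}{N!}-\frac{1}{(N+1)!}$ this yields the upper bound
\[
\nu\times\mathcal{L}^{1}(B_{p}((x,t),r))\leq 2r/k^{*}(r)!.
\]
In the other direction, keeping only the interval centred at the point of $I_{N}$ nearest to $t$ gives $\mathcal{L}^{1}(L(r,N,t))\geq 2(r-d_{N}(t))$ whenever $d_{N}(t)<r$.

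To exploit these I would invoke ergodicity of the doubling map $T\colon t\mapsto 2t\bmod 1$ on $I$. For almost every $t$, Birkhoff's theorem applied to $\mathbf{1}_{[3/8,5/8]}$ yields infinitely many $k$ with $T^{k-1}t\in[3/8,5/8]$, equivalently $d_{k-1}(t)\geq 3\cdot 2^{-(k+2)}$. For each such $k$ the recursion $d_{k}=2^{-k}-d_{k-1}$, valid in the regime $d_{k-1}>2^{-(k+1)}$, gives $d_{k}(t)\leq 2^{-(k+2)}<d_{k-1}(t)/2$. Setting $r_{k}=d_{k-1}(t)/2$ then forces $k^{*}(r_{k})=k$, so the upper bound reads $\nu\times\mathcal{L}^{1}(B_{p}((x,t),r_{k}))\leq d_{k-1}(t)/k!$, while at radius $4r_{k}=2d_{k-1}(t)$ the lower bound applied at level $N=k-1$ gives $\nu\times\mathcal{L}^{1}(B_{p}((x,t),4r_{k}))\geq 2(k-1)d_{k-1}(t)/k!$. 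The ratio is at least $2(k-1)\to\infty$, and since $x$ was arbitrary this transfers by Fubini to almost every $(x,t)\in M\times I$.

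The main obstacle is setting up the slicing identity cleanly and confirming that the upper bound is not swamped by contributions from levels $N>k^{*}(r)$; the telescoping collapse of $\sum_{N\geq k^{*}}N/(N+1)!$ to $1/k^{*}!$ is the decisive piece of arithmetic. Once that is in place, the rest is routine one-dimensional geometry plus a standard equidistribution input for the binary digits of $t$.
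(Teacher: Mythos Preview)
Your argument is correct, but it is considerably more elaborate than the paper's. The paper never writes down the exact slicing identity for $\nu\times\mathcal{L}^{1}(B_{p}((x,t),r))$; instead it observes two crude containments. If $t\in E_{k}:=\{t:|t-u|\geq 2^{-(k+2)}\ \forall u\in I_{k}\}$, then $B_{p}((x,t),2^{-(k+2)})\subset B_{M}(x,2^{-(k+1)})\times(t-2^{-(k+2)},t+2^{-(k+2)})$, giving the upper bound $2^{-(k+1)}\nu_{k+1}$. For the lower bound it finds a single $u\in I_{k}$ with $|t-u|\leq 2^{-(k+1)}$ and notes $B_{M}(x,2^{-k})\times(u-2^{-(k+1)},u+2^{-(k+1)})\subset B_{p}((x,t),2^{-k})$, giving measure at least $2^{-k}\nu_{k}$. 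The ratio is then at least $2\nu_{k}/\nu_{k+1}=2(k+1)\to\infty$. Thus the paper bypasses both your telescoping sum and your exact formula by working with single rectangles.

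For the full-measure set, the paper also avoids ergodic theory: it shows $\bigcap_{l\geq k}E_{l}^{c}=I_{k}$ by an elementary nesting argument, so $\limsup E_{k}$ has full Lebesgue measure. Your invocation of Birkhoff for $\mathbf{1}_{[3/8,5/8]}$ is valid but heavier than needed; the condition $T^{k-1}t\in[3/8,5/8]$ is essentially a restriction of the paper's $t\in E_{k-1}$ (which corresponds to $T^{k-1}t\in[1/4,3/4]$). What your approach buys is an explicit measure formula that could be reused for finer estimates; what the paper's buys is brevity, since one rectangle on each side already captures the factorial blow-up $\nu_{k}/\nu_{k+1}$.
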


\begin{proof}
The main idea is that, on small scales, rectangles centred on larger jump levels have much greater measure than those with similar height and centred on smaller jump levels. Let
\[ E_{k}= \{t \in I : |t-u|\geq 1/2^{k+2} \  \forall u \in I_{k}\}.\]
We claim $\nu \times \mathcal{L}^{1}(M \times \limsup E_{k})=1$. To see this it suffices to show that for any fixed $k$ the set $\cap_{l \geq k} (E_{l})^{c}$ has measure $0$. We notice that if $t \in I$, $|t-n/2^l|< 1/2^{l+2}$ for a fixed $n \in \mathbb{N}$ and there exists $m \in \mathbb{N}$ such that $|t-m/2^{l+1}|<1/2^{l+3}$ then $|t-n/2^l|<1/2^{l+3}$. Hence, repeatedly applying this observation, we see $\cap_{l \geq k} (E_{l})^{c} = I_{k}$ so has measure $0$. 

Fix $(x,t) \in M \times \limsup E_{k}$ and an increasing sequence $n_{k} \to \infty$ such that $t \in E_{n_{k}}$ for every $k$.  We cannot reach a jump level of size $n_{k}$ within vertical distance strictly less than $1/2^{n_{k}+2}$ of $(x,t)$. Hence 
\[ B_{p}((x,t),1/2^{n_{k}+2}) \subset B_{M}(x,1/2^{n_{k}+1}) \times (t-1/2^{n_{k}+2},t+1/2^{n_{k}+2})\]
which implies
\[ \nu \times \mathcal{L}^{1}(B_{p}((x,t),1/2^{n_{k}+2})) \leq 1/2^{n_{k}+1}\nu_{n_{k}+1}.\]

We note that for each fixed $k$ there exists $u \in I_{n_{k}}$ with $|t-u|\leq 1/2^{n_{k}+1}$. Hence 
\[ B_{M}(x,1/2^{n_{k}}) \times (u-1/2^{n_{k}+1},u+1/2^{n_{k}+1}) \subset B_{p}((x,t),1/2^{n_{k}})\]
which implies 
\[\nu \times \mathcal{L}^{1} (B_{p}((x,t),1/2^{n_{k}})) \geq 1/2^{n_{k}} \nu_{n_{k}}.\]

Putting these inequalities together we observe

\[ \frac{\nu \times \mathcal{L}^{1}(B_{p}((x,t),1/2^{n_{k}}))}{\nu \times \mathcal{L}^{1} (B_{p}((x,t),1/2^{n_{k}+2}))} \geq  2 \frac{\nu_{n_{k}}}{\nu_{n_{k+1}}} \to \infty\]
as $k \to \infty$. Hence $\nu\times \mathcal{L}^{1}$ is not pointwise doubling at $(x,t)$.
\end{proof}

\section{Approximate Differentiability}

We now show $(M \times I, d_{p}, \nu \times \mathcal{L}^{1})$ is an approximate differentiability space.

\begin{theorem}
The space $(M \times I, d_{p}, \nu \times \mathcal{L}^{1})$ is an approximate differentiability space. The structure consists of the single chart $(M \times I, h)$ and for each Lipschitz map and almost every $(x_0,t_0) \in M\times I$ the derivative of $f$ at $(x_0,t_0)$ is given by
\[ df (x_{0},t_{0}) := \lim_{u \to 0} \frac{f(x_{0},t_{0}+u)-f(x_{0},t_{0})}{u}\]
\end{theorem}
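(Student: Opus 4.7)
The plan is to proceed in three stages. First, I would check that the chart map $h(x,t)=t$ is $1$-Lipschitz, which is immediate from the bound $d_{p}((x,t),(y,s))\geq|t-s|$ obtained by applying $|t-u|+|u-s|\geq|t-s|$ inside the infimum defining $d_{p}$. Next, I would show that $df(x_0,t_0)$ exists at $\nu\times\mathcal L^1$-a.e.\ point: plugging $y=x$ into the pseudometric formula (valid since $d_M(x,x)=0<1/2^k$ for every $k$) yields $d_{p}((x,t),(x,s))=|t-s|$, so each fiber function $t\mapsto f(x,t)$ is Lipschitz with constant at most $\operatorname{Lip}(f)$. The classical one-dimensional Rademacher theorem then gives $\mathcal L^1$-a.e.\ existence of the derivative along every fiber, and Fubini extends this to $\nu\times\mathcal L^1$-a.e.\ existence on $M\times I$.

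The third and main stage is approximate differentiability. The critical observation enabling the argument is that if $u\in I_k$ and $d_M(x_0,y)<1/2^k$, then $d_{p}((x_0,u),(y,u))=0$, so any Lipschitz $f$ is forced to satisfy $f(x_0,u)=f(y,u)$. Given $(y,s)\in B_{p}((x_0,t_0),r)$, I would choose a near-optimal $u\in I_k$ with $d_M(x_0,y)<1/2^k$ so that $|t_0-u|+|u-s|$ is within a factor close to $1$ of $d_p((y,s),(x_0,t_0))$, and then apply the fundamental theorem of calculus on the fibers through $y$ and $x_0$ (each of which is Lipschitz in the height coordinate), obtaining
\[
f(y,s)-f(x_0,t_0)=\int_{u}^{s}df(y,\tau)\,d\tau+\int_{t_0}^{u}df(x_0,\tau)\,d\tau.
\]
Subtracting $df(x_0,t_0)(s-t_0)=df(x_0,t_0)(s-u)+df(x_0,t_0)(u-t_0)$ reduces the error to
\[
\int_{u}^{s}[df(y,\tau)-df(x_0,t_0)]\,d\tau+\int_{t_0}^{u}[df(x_0,\tau)-df(x_0,t_0)]\,d\tau.
\]
The second integral is $o(|u-t_0|)=o(d_p)$ whenever $t_0$ is a Lebesgue point of $df(x_0,\cdot)$, which holds for $\mathcal L^1$-a.e.\ $t_0$.

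The first integral is the main obstacle. Its contribution, relative to $d_p\geq|s-u|$, is at most the mean of $|df(y,\tau)-df(x_0,t_0)|$ over $\tau\in[u,s]$, and I need this to be small for all but a vanishing $\mu$-density of $(y,s)\in B_{p}((x_0,t_0),r)$ as $r\downarrow 0$. The hard part is that $\mu=\nu\times\mathcal L^1$ is non-doubling, ruling out a direct appeal to a Lebesgue density theorem over $d_p$-balls. I would address this by exploiting the product structure, combining the one-dimensional Lebesgue-point property along fibers (available at $\mathcal L^1$-a.e.\ height for each $y$, and hence at $\nu\times\mathcal L^1$-a.e.\ $(y,\tau)$ by Fubini) with the Vitali density property of the ultrametric space $(M,d_M,\nu)$ recalled in the previous section. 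Averages of $df(\cdot,\tau)$ over shrinking islands about $x_0$ will concentrate at $df(x_0,t_0)$ for $\nu$-a.e.\ $x_0$; assembling these ingredients forces the bad set (where the average exceeds $\varepsilon$) to have vanishing $\mu$-density in $B_{p}((x_0,t_0),r)$, giving the required approximate limit.
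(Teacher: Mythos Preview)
Your first two stages and the error decomposition via the jump level $u$ match the paper exactly: the paper also writes $f(y,s)-f(x_0,t_0)$ as a sum of two fiber integrals, and controls the integral along the $x_0$-fiber by the one-dimensional Lebesgue-point property. The gap is entirely in your treatment of the first integral $\int_u^s[df(y,\tau)-df(x_0,t_0)]\,d\tau$, where ``assembling these ingredients'' hides all of the work.

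Two concrete obstacles. First, you must compare the measure of the bad set to $\mu(B_p((x_0,t_0),r))$, but a $d_p$-ball is a complicated union of rectangles $B_M(x_0,1/2^{k})\times(u-\rho,u+\rho)$ at many different scales $k$, and because $\nu$ is non-doubling these pieces have wildly different measures. The paper needs a separate lemma showing that each small $d_p$-ball is, up to an $\varepsilon$-fraction of its measure, covered by at most three specific rectangles; only then can one reduce the density estimate to rectangles. Nothing in your sketch produces this reduction. Second, even on rectangles the $M$-radius and the $I$-radius decouple, so one needs a density theorem for rectangles of \emph{arbitrary} aspect ratio (the paper invokes Bruckner--Weiss); having separate density on $M$ and on $I$ does not give this directly. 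The paper then runs an iterated density argument (sets $E_k^1\subset E_k^2\subset E_k^3$: first Lebesgue points along the $x_0$-fiber, then rectangle-density points of those, then Lebesgue points of those along the $x_0$-fiber again) so that near $(x_0,t_0)$ one can hop to a nearby height $t_i'$ where rectangle-density is available at the right scales.

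There is also a case you have not addressed: when $|s-u|$ is tiny compared to $|u-t_0|$ (the rectangle is ``thin''), the averaging interval $[u,s]$ is too short to invoke any density property at $(y,s)$, and the paper instead uses the crude Lipschitz bound $|f(y,s)-f(y,u)|\leq \operatorname{Lip}(f)\,|s-u|\leq \varepsilon\operatorname{Lip}(f)\,|u-t_0|$. Your proposal, by choosing $u$ separately for each $(y,s)$, leaves this regime uncontrolled. Fixing $u=t_i$ uniformly over each rectangle, as the paper does, is what makes the case split clean.
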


\begin{remark}
While it will take effort to prove existence of derivatives with respect to $h$ it is not hard to see uniqueness. If $a_{1}, a_{2} \in \mathbb{R}$ are possible candidates for derivatives of a given function at $(x_{0},t_{0}) \in M \times I$ then by the triangle inequality,
\[ \operatorname{aplim} \frac{|(a_{1}-a_{2})(t-t_{0})|}{d_{p}((x,t),(x_{0},t_{0}))}=0.\]
To see this implies $a_{1}=a_{2}$ it suffices to observe that for each $r>0$ the points $(x,t)\in B_{p}((x_{0},t_{0}),r)$ satisfying $|t-t_{0}|>r/2$ comprise at least half the measure of $B_{p}((x_{0},t_{0}),r)$ and satisfy $|t-t_{0}|>r/2>d_{p}((x,t),(x_{0},t_{0}))/2$.
\end{remark}

To prove the theorem we show balls in $(M \times I, d_{p})$ are on small scales well approximated by at most three, not necessarily disjoint, rectangles.

\begin{lemma}
Fix $\varepsilon > 0, x \in M$ and $t \in I\setminus \cup_{k=1}^{\infty} I_{k}$. Choose $r>0$ sufficiently small so that $(t-r,t+r)\cap I_{k} \neq \varnothing$ implies $k>0$ and $\nu_{k+1}/\nu_{k} < \varepsilon /2$. Then there exist at most three rectangles $R_{i}\subset B_{p}((x,t),r)$ of the form
\[ R_{i}=B_{M}(x,1/2^{k_{i}}) \times (t_{i}-r_{i},t_{i}+r_{i}),\]
where $t_{i} \in I_{k_{i}}$, $r_{i}=r-|t-t_{i}|>0$, $k_{1}= \min \{ k \in \mathbb{N} : (t-r/2,t+r/2) \cap I_{k} \neq \varnothing \}$ and $k_{2}, k_{3}< k_{1}$, if defined, such that
\[ \frac{\nu \times \mathcal{L}^{1}( B_{p}((x,t),r) \setminus \cup_{i} R_{i})}{\nu \times \mathcal{L}^{1} (B_{p}((x,t),r))}  < \varepsilon.\]
\end{lemma}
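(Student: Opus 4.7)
The plan is to express $B_p((x,t), r)$ as a countable union of rectangles, identify the three with dominant measure, and bound the remainder by a geometric tail.

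As already observed in the paper, $B_p((x,t), r)$ is the union, over all $k \geq 0$ and $u \in I_k$ with $|t-u| < r$, of the rectangles
\[R_{k,u} := B_M(x, 1/2^k) \times \bigl(u - r + |t-u|,\; u + r - |t-u|\bigr).\]
The first observation I would exploit is that for fixed $u$ the factor $B_M(x, 1/2^k)$ shrinks in $k$, so $R_{k, u} \subset R_{k(u), u}$ for any $k \geq k(u) := \min\{k : u \in I_k\}$, reducing the relevant index set to a single rectangle at each dyadic $u \in (t-r, t+r)$. I would then take $t_1$ to be the point of $I_{k_1}$ in $(t-r/2, t+r/2)$, which is unique because two distinct points of $I_{k_1} \setminus I_{k_1 - 1}$ in a common convex subinterval must sandwich a point of $I_{k_1 - 1}$ (consecutive odd numerators over $2^{k_1}$ differ by at least $2$), contradicting minimality of $k_1$. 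In each side interval $(t-r, t-r/2]$ and $[t+r/2, t+r)$ I would look for the point realising the minimum of $k(u)$; if that minimum is strictly less than $k_1$ I take it as $t_2$ or $t_3$ (uniqueness by the same pigeon-hole), and otherwise omit that rectangle. Each $R_i = R_{k_i, t_i}$ is one of the defining rectangles of $B_p((x,t), r)$, hence contained in it.

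The core of the argument is bounding $\nu \times \mathcal{L}^1\bigl(B_p((x,t), r) \setminus (R_1 \cup R_2 \cup R_3)\bigr)$. Every such point lies in some $R_{k(u), u}$ with $(k(u), u)$ outside the chosen triple, and I would classify these ``extra'' $u$'s by region: in the middle $(t-r/2, t+r/2)$, extras have $k(u) \geq k_1 + 1$ by minimality of $k_1$; in a side with $k_i$ undefined, extras have $k(u) \geq k_1$; in a side with $k_i$ defined, uniqueness forces $k(u) \geq k_i + 1$. I would then use the spacing estimate that the number of $u$ with $k(u) = k$ in an interval of length $L$ is at most $L \cdot 2^{k-1} + 1$, combined with the trivial bound $\nu \times \mathcal{L}^1(R_{k(u), u}) \leq 2r\, \nu_{k(u)}$, the iterated hypothesis $\nu_{k+1}/\nu_k < \varepsilon/2$ (which yields $\nu_k \leq (\varepsilon/2)^{k - k_1}\,\nu_{k_1}$ for $k \geq k_1$), and the inequality $r \cdot 2^{k_1 - 1} \leq 1$ (forced by minimality of $k_1$, since the open interval $(t-r/2, t+r/2)$ of length $r$ contains no point of the $1/2^{k_1-1}$-spaced grid $I_{k_1-1}$). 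Summing the resulting geometric series should give a bound $C \varepsilon\, r\, \nu_{k_1}$ for an absolute constant $C$. Since $\nu \times \mathcal{L}^1(B_p((x,t), r)) \geq \nu \times \mathcal{L}^1(R_1) \geq r\, \nu_{k_1}$, dividing produces a ratio at most $C\varepsilon$; the bound $<\varepsilon$ follows by absorbing $C$ into the constant in the hypothesis on $r$.

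The main obstacle I foresee is the bookkeeping: partitioning extras into the three regions cleanly, verifying via the domination $R_{k,u} \subset R_{k(u), u}$ that every extra point is accounted for by a single maximal rectangle, and tracking constants tightly enough that the final ratio really lies below $\varepsilon$ rather than some multiple of it. The uniqueness statements for $t_1, t_2, t_3$ depend on the dyadic pigeon-hole argument, which is elementary but relies on the specific structure of $I_k \setminus I_{k-1}$ as odd-numerator dyadics.
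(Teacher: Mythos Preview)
Your geometric-series approach is different from the paper's, and as written it has a genuine gap. The problem is in the side intervals. You claim that in a side where $t_i$ is defined, extras satisfy $k(u)\geq k_i+1$, and in a side where $t_i$ is undefined, extras satisfy $k(u)\geq k_1$. Both statements are true but far too weak for your sum to converge to something of order $\varepsilon\,r\,\nu_{k_1}$. Your counting bound $L\cdot 2^{k-1}+1$ allows one phantom point at each level $k$ in the range $k_i<k\leq k_1$, and each such point contributes roughly $2r\,\nu_k$ to your sum. But for $k<k_1$ the hypothesis gives $\nu_k>\nu_{k_1}(2/\varepsilon)^{k_1-k}$, so these terms are enormous compared with $r\,\nu_{k_1}$; the series you describe is not bounded by $C\varepsilon\,r\,\nu_{k_1}$ at all.

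The missing observation is that \emph{every} extra dyadic $u$ in $(t-r,t+r)$, in the middle or either side, actually satisfies $k(u)\geq k_1+1$. For the sides this needs one more pigeon-hole step: since $r<1/2^{k_1-1}$, each side interval (length $r/2$) contains at most one point of $I_k$ for every $k\leq k_1$. If $t_i$ is defined that unique point is $t_i$ itself, so there are no extras at levels $\leq k_1$. If $t_i$ is undefined and some $u^*\in I_{k_1}\setminus I_{k_1-1}$ lay in the side, then the midpoint $(t_1+u^*)/2\in I_{k_1-1}$ would land either in $(t-r/2,t+r/2)$ (contradicting minimality of $k_1$) or in the side (contradicting that $t_i$ is undefined). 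Once you know $k(u)\geq k_1+1$ for all extras, every extra rectangle has $M$-factor contained in $B_M(x,1/2^{k_1+1})$, whence
\[
B_p((x,t),r)\setminus\bigcup_i R_i\;\subset\; B_M(x,1/2^{k_1+1})\times(t-r,t+r).
\]
This single containment is exactly what the paper proves, and it gives the ratio bound $2\nu_{k_1+1}/\nu_{k_1}<\varepsilon$ in one line, with no series and no constant to absorb. Your approach can be repaired by inserting this argument, but at that point the geometric-series machinery becomes unnecessary.
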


\begin{proof}
The main idea is that rectangles centred on larger jump levels and relatively close to the centre of a ball contain most of the measure. Fix $(x,t)\in M \times I, \varepsilon > 0$ and $r>0$ as in the statement of the lemma. Define $k_{1}$ corresponding to the largest jump level we can reach within height $r/2$ above or below $(x,t)$. Since $t\not\in \cup_{k=1}^{\infty} I_k$ this intersection is a single point, $t_1$ say.

Since $(t-r/2,t+r/2)\cap I_{k_{1}-1}=\varnothing$ and elements of $I_{k_{1}-1}$ are spaced only a distance $1/2^{k_{1}-1}$ apart we deduce $r<1/2^{k_{1}-1}$. It may happen that $[t+r/2,t+r) \cap I_{k_{1}-1}$ is empty but as before this intersection is at most a single point. If so write $[t+r/2,t+r)\cap I_{k_{1}-1}=\{t_{2}\}$ and let
\[ k_{2}=\min \{ k \in \mathbb{N} : [t+r/2,t+r) \cap I_{k} \neq \varnothing \}.\]
Similarly if, the intersection is non empty, let $(t-r,t-r/2]\cap I_{k_{1}-1}=\{t_{3}\}$ and
\[k_{3}=\min \{k \in \mathbb{N} : (t-r,t-r/2] \cap I_{k} \neq \varnothing \}.\]

For those $1\leq i \leq 3$ for which which we have defined $t_{i}$ let $r_{i}=r-|t-t_{i}|>0$ and define $R_{i} \subset B_{p}((x,t),r)$ to be the rectangle
\[ R_{i}=B_{M}(x,1/2^{k_{i}}) \times (t_{i}-r_{i},t_{i}+r_{i}).\]

We estimate the measure of the region in $B_{p}((x,t),r)$ not covered by the defined rectangles. Notice
\[ B_{p}((x,t),r) \setminus \cup_{i} R_{i} \subset B_{M}(x,1/2^{k_{1}+1}) \times (t-r,t+r).\]
Using that $R_{1} \subset B_{p}((x,t),r)$ and $r_{1}> r/2$ this implies
\[ \frac{ \nu \times \mathcal{L}^{1}( B_{p}((x,t),r) \setminus \cup_{i} R_{i})}{ \nu \times \mathcal{L}^{1} (B_{p}((x,t),r))} \leq \frac{ 2r\nu_{k_{1}+1}}{ r\nu_{k_{1}}} < \varepsilon. \qedhere \]
\end{proof}

Our proof of the theorem will use the density theorem for vertical lines and rectangles to construct paths along which the derivative of $f$ is almost constant. We then show that for most points changes in $f$ are well approximated by the product of the derivative and the change in height.

\begin{lemma}
The density theorem with respect to rectangles holds for $d_{\infty}$ Borel measurable sets in $M \times I$. That is, if $A \subset M \times I$ is $d_{\infty}$ Borel measurable then for almost every $(x,t) \in A$ for all $\varepsilon > 0$ there exists $R>0$ such that $0<u,v<R$ implies
\[ \frac{ \nu \times \mathcal{L}^{1} ( B_{M}(x,u) \times (t-v,t+v) \setminus A)}{2v\nu(B_{M}(x,u))} < \varepsilon. \]
Each $d_{\infty}$ Borel measurable function $g\colon M \times I \to \mathbb{R}$ is approximately continuous almost everywhere. That is, for almost every $(x,t) \in M \times I$ for all $\varepsilon > 0$ there exists $R>0$ such that $0<u,v<R$ implies
\[ \frac{ \nu \times \mathcal{L}^{1} \{(y,s) \in B_{M}(x,u) \times (t-v,t+v) : |g(y,s)-g(x,t)|>\varepsilon\}}{2v\nu(B_{M}(x,u))}<\varepsilon.\]
\end{lemma}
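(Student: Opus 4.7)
My plan is a two-parameter differentiation theorem: I exploit the ultrametric structure of $M$ to realise horizontal averaging as conditional expectation on a finite partition, then combine with Lebesgue differentiation in the vertical direction.

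For each $k\in\mathbb{N}$, let $\mathcal{P}_k$ be the partition of $M$ into islands of level $k$. Since $M$ is ultrametric, $B_M(x,u)$ depends on $u$ only through the integer $k=k(u)$ with $u\in(1/2^{k+1},1/2^k]$, and equals the element $I_k(x)\in\mathcal{P}_k$ containing $x$. Consequently the density ratio in the statement equals $1-(2v)^{-1}\int_{t-v}^{t+v}A_k(x,s)\,ds$, where $A_k:=\mathbb{E}[\mathbf{1}_A\mid\mathcal{P}_k\otimes\mathcal{B}(I)]$ is obtained by averaging $\mathbf{1}_A$ horizontally over the island of level $k$. Since the partitions $\mathcal{P}_k$ nest and generate the Borel $\sigma$-algebra of $M$, Doob's martingale theorem gives $A_k\to\mathbf{1}_A$ almost everywhere as $k\to\infty$; Lebesgue differentiation gives $(2v)^{-1}\int_{t-v}^{t+v}A_k(x,s)\,ds\to A_k(x,t)$ as $v\to 0$ for each fixed $k$ and almost every $(x,t)$.

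The main obstacle is upgrading these two iterated limits to a joint limit in $(u,v)\to(0,0)$, and I would resolve it via a rectangle maximal inequality. Defining
\[M_Rg(x,t):=\sup_{u,v>0}\frac{1}{2v\,\nu(B_M(x,u))}\int_{B_M(x,u)\times(t-v,t+v)}|g|\,d(\nu\times\mathcal{L}^1),\]
the factorisation $M_Rg\leq M_{HL}^{(t)}\bigl(M_{\mathrm{mart}}^{(x)}g\bigr)$, combined with Doob's maximal inequality for the horizontal martingale maximal operator and the classical Hardy--Littlewood maximal inequality for the vertical average, yields boundedness of $M_R$ on $L^p(\nu\times\mathcal{L}^1)$ for $p>1$. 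The density theorem then follows by a standard approximation: approximate $\mathbf{1}_A$ in $L^2$ by a function $f$ continuous on the compact space $(M\times I,d_\infty)$, for which rectangle averages converge pointwise to $f$ by uniform continuity, and control the error via Chebyshev applied to $\|M_R(\mathbf{1}_A-f)\|_{L^2}$, then let the approximation error tend to zero.

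Approximate continuity of a $d_\infty$-Borel function $g$ then follows by applying the density statement to the Borel sets $\{g>q\}$ and $\{g<q\}$ for each rational $q$: at almost every $(x,t)$ and every rational $q\neq g(x,t)$, the point lies in exactly one of these sets and is a density point there. Given $\varepsilon>0$, choosing rationals $q_1\in(g(x,t)-\varepsilon,g(x,t))$ and $q_2\in(g(x,t),g(x,t)+\varepsilon)$ lets us bound $\{|g-g(x,t)|>\varepsilon\}\subset\{g\leq q_1\}\cup\{g\geq q_2\}$, both of which have density zero at $(x,t)$, giving the required estimate.
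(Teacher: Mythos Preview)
Your argument is correct. The paper itself gives no proof: it simply cites Theorem~4 of Bruckner--Weiss for the rectangle density statement, adds the parenthetical remark that it ``can be proven using the density theorem in $M$ and $I$,'' and says the approximate-continuity clause ``follows by applying the density theorem in the usual way.''

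Your route is a concrete, self-contained realisation of the second of those two suggestions. The crucial observation---that horizontal balls in the ultrametric space $M$ coincide with the atoms of a nested sequence of finite partitions, so that horizontal averaging is a martingale and Doob's maximal inequality applies---is precisely what makes the two-parameter rectangle basis tractable here (for general product spaces the rectangle basis need not differentiate $L^{1}$). Composing the martingale maximal bound in $x$ with the Hardy--Littlewood bound in $t$ gives $L^{p}$ boundedness of $M_{R}$ for $p>1$; since indicator functions lie in $L^{2}$ this suffices for the density theorem via the standard approximation-by-continuous-functions argument you sketch. Your deduction of approximate continuity from density via rational level sets is likewise the ``usual way'' the paper alludes to. So what you have written is not so much a different strategy as an actual proof where the paper supplies only a reference; the gain is that your argument is independent of the Bruckner--Weiss paper and makes explicit why the ultrametric structure of $M$ is doing the work.
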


The first part of the lemma follows from Theorem 4 of \cite{Rectangles} by Bruckner and Weiss or can be proven using the density theorem in $M$ and $I$. The second part then follows by applying the density theorem in the usual way.

We can now prove the theorem.

\begin{proof}[Proof of Theorem]

Let $f \colon (M \times I, d_{p}) \to \mathbb{R}$ be Lipschitz. Since $i\colon (M \times I, d_{\infty}) \to (M \times I, d_{p})$ is Lipschitz this implies $f$ is Lipschitz as a function on $(M \times I,d_{\infty})$. It follows easily from Fubini's theorem that $df$ exists almost everywhere in $M \times I$ and can be extended to a $d_{\infty}$ Borel measurable function on $M\times I$. Thus we can use approximate continuity of $df$ along vertical lines and for rectangles. Fix $\varepsilon > 0$. We begin by using the density theorem to construct paths along which $df$ doesn't vary too much.

\begin{claim}
Define, for each $(x,t) \in M \times I$,
\[D(x,t,\varepsilon):=\{(y,s) \in M \times I : | df(y,s)-df(x,t) |\leq\varepsilon \}.\]
For each $k$ the following are $d_\infty$ Borel subsets of $M \times I$:

\begin{multline*}E_k^1:=\{(x,t):\mathcal{L}^{1} \{ s \in (t-r,t+r) : (x,s) \in D(x,t,\varepsilon)\}
\\\geq (2-\varepsilon)r \ \forall \ 0<r<1/k\}\end{multline*}
\begin{multline*}E_k^2:=\{(x,t):\nu \times \mathcal{L}^{1}(B_{M}(x,r) \times (t-s,t+s)\cap E_k^1 \cap D(x,t,\varepsilon))
\\\geq (2-\varepsilon)s\nu(B_M(x,r))\ \forall \ 0<r<1/k\}\end{multline*}
\begin{multline*}E_k^3:=\{(x,t):\mathcal{L}^{1}\{s\in(t-r,t+r):(x,s)\in E_k^2\cap D(x,t,\varepsilon)\}
\\\geq (2-\varepsilon^2) r\ \forall\  0<r<1/k\}.\end{multline*}

Furthermore $\cup_{k=1}^{\infty} E_{k}^{3}$ is a set of full measure in $M\times I$.
\end{claim}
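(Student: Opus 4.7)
My plan is to prove the two parts of the claim separately. The Borel measurability of each $E_k^j$ I would establish by induction on $j$. Since $df$ extends to a $d_\infty$ Borel function on $M\times I$, the indicator $\mathbf 1_{D(x,t,\varepsilon)}(y,s)$ is jointly Borel in $(x,t,y,s)$, so Fubini--Tonelli makes $(x,t,r)\mapsto\mathcal L^1\{s\in(t-r,t+r):(x,s)\in D(x,t,\varepsilon)\}$ Borel. Dominated convergence shows this is continuous in $r$, so the condition ``for all $r\in(0,1/k)$'' can be replaced by an intersection over the rationals, giving $E_k^1$ Borel. Knowing $E_k^1$ is Borel, the same Fubini argument makes the integrand defining $E_k^2$ a Borel function of the free parameters; the only nuance is that $r\mapsto\nu(B_M(x,r))$ is merely right-continuous and piecewise constant, but that still suffices to reduce the quantifier to a countable dense set. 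An identical argument handles $E_k^3$.

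For the full measure statement I would cascade three approximate continuity results. First, Fubini applied to $df$ shows that for a.e.\ $x\in M$ the section $s\mapsto df(x,s)$ is $\mathcal L^1$-measurable; the one-dimensional Lebesgue density theorem then gives its approximate continuity at $\mathcal L^1$-a.e.\ $t$, so Fubini again yields a full-measure subset of $M\times I$ on which $\mathcal L^1\{s\in(t-r,t+r):(x,s)\notin D(x,t,\varepsilon)\}=o(r)$; each such $(x,t)$ lies in $E_k^1$ for all sufficiently large $k$, hence in $\bigcup_k E_k^1$. Second, at a.e.\ point of each $E_k^1$ the rectangle density theorem from the preceding lemma, applied both to the Borel set $E_k^1$ and to the $d_\infty$ Borel function $df$, gives rectangles of density at least $1-\varepsilon/2$ in $E_k^1\cap D(x,t,\varepsilon)$, so such a point lies in $E_{k'}^2$ for some $k'$. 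Third, applying Fubini and the one-dimensional density theorem to the Borel set $E_k^2$, combined with vertical approximate continuity of $df$, gives the analogous statement for $E_k^3$ with the looser error margin $(2-\varepsilon^2)r$. Taking countable unions in $k$ at each stage preserves full measure, so $\bigcup_k E_k^3$ is of full measure.

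The chief obstacle is a bookkeeping one: each density statement provides only an asymptotic ``error is $o(r)$ as $r\downarrow 0$'', whereas the defining inequalities require the bound to hold uniformly for \emph{every} $r<1/k$ with a single $k$. This is standard to resolve by choosing $k$ large enough that the error is below the prescribed margin on all of $(0,1/k)$, which is possible precisely because $\varepsilon$ is fixed throughout. The cascade is genuinely inductive --- $E_k^1$ being Borel is needed to even define the measure appearing in $E_k^2$, and likewise $E_k^2$ Borel is needed for $E_k^3$ --- but no substantive new tool is required beyond the rectangle density theorem, the one-dimensional density theorem, and approximate continuity of $df$, all of which are already in hand.
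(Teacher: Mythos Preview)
Your proposal is correct and follows essentially the same approach as the paper: both arguments establish Borel measurability via Fubini and a reduction of the universal quantifiers to countable (rational) intersections, and both obtain full measure of $\bigcup_k E_k^3$ by the same cascade---one-dimensional density along vertical lines for $E_k^1$, the rectangle density theorem for $E_k^2$, and vertical-line density again for $E_k^3$. The only cosmetic difference is that the paper discretises the centre value $df(x,t)$ by rationals~$q$, replacing $D(x,t,\varepsilon)$ with fixed sets $D_q=\{(y,s):|df(y,s)-q|<\varepsilon\}$ so that for each fixed $q$ the integrand no longer depends on the base point, whereas you work directly with the jointly Borel indicator $\mathbf 1_{|df(y,s)-df(x,t)|\le\varepsilon}$; either route yields the same conclusion.
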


\begin{proof}
For a fixed $q$ let
\[D_q = \{(y,s):|df(y,s)-q|<\varepsilon\}\]
Then for any Borel $A\subset I$ and $r,s,q\in\mathbb R$ the functions
\[(x,t)\mapsto \mathcal{L}^1 \{s\in(t-r,t+r):(x,s)\in D_q \cap A\}\]
and
\[(x,t)\mapsto \nu\times\mathcal{L}^1(B_M(x,r)\times(t-s,t+s)\cap D_q \cap A)\]
are defined on a product of separable metric spaces, Borel measurable in the first variable by Fubini's theorem and continuous in the second and so are Borel measurable as a function of two variables. By taking countable intersections over appropriate $q,r,s\in\mathbb Q$ with $A=M\times I, E_k^1$ and $E_k^2$ shows the measurability of $E_k^1,E_k^2$ and $E_k^3$ respectively.

By various applications  of Fubini's theorem and approximate continuity for vertical lines and rectangles we see that $\cup_{k=1}^\infty E_k^1, \cup_{k=1}^\infty E_k^2$ and hence $\cup_{k=1}^\infty E_k^3$ are of full measure.
\end{proof}

Now fix $(x_{0},t_{0}) \in E_{k}^{3}$ for some $k$ with $t_{0} \notin \cup_{l=1}^{\infty} I_{l}$. Choose $R<1/k$ small enough that $(t_{0}-R,t_{0}+R)\cap I_{s} \neq \varnothing$ implies $1/2^{s}<1/k$ and $\nu_{s+1}/\nu_{s} < \varepsilon /2$. Fix $0<r<R$.

As in the lemma we can find at most three, not necessarily disjoint, rectangles $R_{i} \subset B_{p}((x_{0},t_{0}),r)$ of the form
\[ R_{i}=B_{M}(x_{0},1/2^{k_{i}}) \times (t_{i}-r_{i},t_{i}+r_{i})\]
where $t_{i} \in I_{k_{i}}$, $r_{i}=r-|t_{0}-t_{i}|>0$, $k_{1}= \min \{ k \in \mathbb{N} : (t_{0}-r/2,t_{0}+r/2) \cap I_{k} \neq \varnothing \}$ and $k_{2}, k_{3} < k_{1}$, if defined, such that
\[ \frac{\nu \times \mathcal{L}^{1}( B_{p}((x_{0},t_{0}),r) \setminus \cup_{i} R_{i})}{\nu \times \mathcal{L}^{1} (B_{p}((x_{0},t_{0}),r))}  < \varepsilon.\]

The vertical paths we use will, for each $i$, join $(x_{0},t_{0})$ to $(x_{0},t_{i})$ and, for most points $(x,t) \in R_{i}$, $(x,t)$ to $(x,t_{i})$. Thus we need to consider points $(x,t) \in R_{i}$ whose distance to $(x_{0},t_{0})$ is given by the length of these paths.

Let
\[G_{i}=\{(x,t) \in R_{i} \colon d_{p}((x,t),(x_{0},t_{0}))=|t-t_{i}|+|t_{i}-t_{0}|\}\]
and define
\[ W_{i}= (B_{M}(x_{0},1/2^{k_{i}})\setminus B_{M}(x_{0},1/2^{k_{i}+1})) \times (t_{i}-r_{i},t_{i}+r_{i}) \subset R_{i}.\]
Notice that
\[ \nu \times \mathcal{L}^{1} (R_{i}\setminus W_{i}) = 2r_{i}\nu_{k_{i}+1} \leq \varepsilon r_{i}\nu_{k_{i}}\leq \varepsilon \nu \times \mathcal{L}^{1}(R_{i}).\]
and $\cup_{i} W_{i} \subset \cup_{i} G_{i}$. Thus 
\[\nu \times \mathcal{L}^{1} (\cup_{i} R_{i} \setminus \cup_{i} G_{i}) \leq \nu \times \mathcal{L}^{1} (\cup_{i} R_{i} \setminus \cup_{i} W_{i}) \leq 3\varepsilon \nu \times \mathcal{L}^{1}(B_{p}((x_{0},t_{0}),r).\]

Now fix $i$. We show that for most points in $G_{i}$ the change in $f$ is well approximated by the product of the derivative and the change in height. 

\begin{claim}
Let
\[L(x_{0},t_{0},\varepsilon):=\{ (x,t):\frac{|f(x,t)-f(x_{0},t_{0})-df(x_{0},t_{0})(t-t_{0})|}{d_{p}((x_{0},t_{0}),(x,t))} > (4 \mathrm{Lip} f +4)\varepsilon\}.\]
Then $\nu \times \mathcal{L}^{1} (G_{i}\cap L(x_{0},t_{0},\varepsilon)) \leq \varepsilon \nu \times \mathcal{L}^{1}(R_{i})$.
\end{claim}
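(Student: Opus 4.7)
The decomposition rests on the observation that for $(x, t) \in G_i$, $x$ and $x_0$ lie in the same $k_i$-island of $M$ and $t_i \in I_{k_i}$, so $d_p((x, t_i), (x_0, t_i)) = 0$ and consequently $f(x, t_i) = f(x_0, t_i)$. Writing
\[ f(x, t) - f(x_0, t_0) - df(x_0, t_0)(t - t_0) = \Delta_1 + \Delta_2, \]
with $\Delta_1 := f(x_0, t_i) - f(x_0, t_0) - df(x_0, t_0)(t_i - t_0)$ (constant on $G_i$) and $\Delta_2 := f(x, t) - f(x, t_i) - df(x_0, t_0)(t - t_i)$, Fubini's theorem together with the identification of $df$ with the vertical partial derivative expresses both pieces as integrals of $df - df(x_0, t_0)$ along the appropriate vertical intervals (for almost every $x_0$ and $x$ respectively).

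To bound $\Delta_1$, the plan is to apply the $E_k^3$ condition at $(x_0, t_0)$ at scale $|t_i - t_0| < r < 1/k$: the set $\{s \in (t_0 - |t_i - t_0|, t_0 + |t_i - t_0|) : |df(x_0, s) - df(x_0, t_0)| > \varepsilon\}$ has measure at most $\varepsilon^2|t_i - t_0|$, so $|\Delta_1| \leq \varepsilon|t_i - t_0| + 2\mathrm{Lip} f \cdot \varepsilon^2|t_i - t_0| \leq (1 + 2\mathrm{Lip} f)\varepsilon|t_i - t_0|$. Since $d_p \geq |t_i - t_0|$ on $G_i$, the estimate $|\Delta_1|/d_p \leq (1 + 2\mathrm{Lip} f)\varepsilon$ holds uniformly on $G_i$. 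Any $(x, t) \in G_i \cap L$ must then satisfy $|\Delta_2|/d_p > (2\mathrm{Lip} f + 3)\varepsilon$, and so $|\Delta_2|/|t - t_i| > (2\mathrm{Lip} f + 3)\varepsilon$ as well (using $d_p \geq |t - t_i|$).

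For $\Delta_2$, I would use $E_k^3$ once more to locate $s^* \in (t_i - r_i, t_i + r_i)$ with $(x_0, s^*) \in E_k^2 \cap D(x_0, t_0, \varepsilon)$; this is possible unless $r_i$ is so small that the trivial bound $|\Delta_2| \leq 2\mathrm{Lip} f \cdot r_i$ already gives $|\Delta_2|/d_p$ below the threshold (this degenerate case occurs only for $i \in \{2, 3\}$, where $d_p \geq r/2$ makes the conclusion immediate and $G_i \cap L$ is empty). Then apply $E_k^2$ at $(x_0, s^*)$ with spatial radius $1/2^{k_i}$ and vertical half-width $s'' = r_i + |s^* - t_i| \leq 2r_i$, so that the resulting rectangle contains $R_i$. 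The $E_k^2$ good set has complement of measure at most $\varepsilon s'' \nu_{k_i} \leq \varepsilon |R_i|$, and on this good set $|df(y, s) - df(x_0, s^*)| \leq \varepsilon$ and hence $|df(y, s) - df(x_0, t_0)| \leq 2\varepsilon$. For each $y$ admitting a good $(y, s'_y) \in R_i \cap E_k^1 \cap D(x_0, s^*, \varepsilon)$, applying $E_k^1$ at $(y, s'_y)$ with scale $O(r_i)$ shows the bad vertical slice $\{s \in (t_i - r_i, t_i + r_i) : (y, s) \notin D(y, s'_y, \varepsilon)\}$ has measure $O(\varepsilon r_i)$, on whose complement $|df(y, s) - df(x_0, t_0)| \leq 3\varepsilon$.

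A one-sided Hardy-Littlewood maximal-function estimate on the anchored average $(1/|t - t_i|)\int_{[t_i, t]} |df(x, s) - df(x_0, t_0)|\,ds$ along each good vertical line, combined with Fubini across $x$ and the bound on $\nu(\text{bad } y)$, then yields $\nu \times \mathcal{L}^1(G_i \cap L) \leq \varepsilon|R_i|$. The principal obstacle in this final step is that a naive Chebyshev inequality loses a factor $\varepsilon^{-1}$ when converting the $O(\varepsilon)$-sized bad rectangle measure coming from $E_k^2$ into a bound on the slice sizes $|B_x|$; to close the argument one has to exploit the sharper $\varepsilon^2$-tightness built into $E_k^3$ and split the analysis into the regime where $|t - t_i|$ is comparable to $|B_x|/\varepsilon$ (handled by a covering estimate) versus the complementary regime (handled directly by the vertical slice control).
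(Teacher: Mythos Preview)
Your decomposition $\Delta_1 + \Delta_2$ and your use of $E_k^3$ to locate a nearby height $s^*$ (the paper calls it $t_i'$) with $(x_0,s^*)\in E_k^2\cap D(x_0,t_0,\varepsilon)$, followed by the $E_k^2$ estimate on a rectangle close to $R_i$, all match the paper. The $\Delta_1$ bound and the degenerate case $r_i\le \varepsilon|t_i-t_0|$ are also essentially as in the paper (there the case split is made explicit: when $r_i\le\varepsilon|t_i-t_0|$ the crude bound $|\Delta_2|\le 2\operatorname{Lip}f\,r_i$ already gives the pointwise inequality on all of $G_i$, so $G_i\cap L=\varnothing$).

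The genuine gap is in your treatment of $\Delta_2$ in the non-degenerate case. You fix one good height $s'_y$ per vertical line, apply $E_k^1$ there, and then try to propagate the estimate to all $t$ via an anchored maximal function; you correctly note this loses $\varepsilon^{-1}$ and then sketch a repair whose details are not convincing (the $\varepsilon^2$ gain in $E_k^3$ only concerns heights near $t_0$, not the $E_k^2$ rectangle, so it does not obviously buy back the loss). The paper sidesteps all of this with a single observation you missed: the $E_k^2$ application already tells you that most points $(x,t)$ of $R_i$ lie in $E_k^1\cap D(x_0,t_0,2\varepsilon)$ \emph{themselves}. For such an $(x,t)$ you apply the $E_k^1$ condition \emph{at} $(x,t)$ with radius $|t-t_i|<r_i<1/k$. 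Since $[t_i,t]\subset (t-|t-t_i|,\,t+|t-t_i|)$, the bad set inside the integration interval has length $\le \varepsilon|t-t_i|$, and on its complement $|df(x,s)-df(x,t)|\le\varepsilon$ while $|df(x,t)-df(x_0,t_0)|\le 2\varepsilon$. This gives $|\Delta_2|\le(3+2\operatorname{Lip}f)\varepsilon|t-t_i|$ pointwise on $G_i\cap E_k^1\cap D(x_0,t_0,2\varepsilon)$, and the complement of this set in $G_i$ already has measure $\le \varepsilon\,\nu\times\mathcal L^1(R_i)$ from your own $E_k^2$ estimate. No maximal function, no covering, no further case split.
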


\begin{proof}

There are two cases depending on the shape and position of $R_{i}$ in relation to $(x_{0},t_{0})$.

Suppose $r_{i}\leq \varepsilon |t_{i}-t_{0}|$ and let $(x,t) \in G_{i}$ so that $|t_{i}-t_{0}| \leq d_{p}((x_{0},t_{0}),(x,t))$. We calculate, since $d_{p}((x_{0},t_{i}),(x,t_{i}))=0$ implies $f(x_{0},t_{i})=f(x,t_{i})$,
\begin{align*}
|f(x,t)-f(x_{0},t_{0})&-df(x_{0},t_{0})(t-t_{0})|\\
& \leq |f(x,t)-f(x,t_{i})|+|df(x_{0},t_{0})(t-t_{i})| \\
& \qquad+ |f(x_{0},t_{i})-f(x_{0},t_{0})-df(x_{0},t_{0})(t_{i}-t_{0})|\\
& \leq 2 \mathrm{Lip} f |t-t_{i}| + \left| \int_{t_{0}}^{t_{i}} (df(x_{0},s)-df(x_{0},t_{0})) ds \right| \\
& \leq 2 \mathrm{Lip} f r_{i} + \varepsilon |t_{i}-t_{0}| + 2 \mathrm{Lip} f \varepsilon |t_{i}-t_{0}|\\
& \leq (4 \mathrm{Lip} f + 1) \varepsilon d_{p}((x_{0},t_{0}),(x,t))
\end{align*}
using the fundamental theorem of calculus along vertical lines and the fact $(x_{0},t_{0}) \in E_{k}^{3} \subset E_{k}^{1}$ implies
\[ \mathcal{L}^{1} \{s \in (t_{0}-|t_{0}-t_{i}|,t_{0}+|t_{0}-t_{i}|) : (x_{0},s) \not\in D(x_{0},t_{0},\varepsilon)\} \leq \varepsilon |t_{i}-t_{0}|.\]

Suppose $r_{i}> \varepsilon |t_{i}-t_{0}|$. Then the rectangle $R_{i}$ is large enough to be well approximated by the rectangles from our use of the density theorem. 

Since $(x_{0},t_{0}) \in E_{k}^3$ we can find $t_{i}'$ with $|t_{i}'-t_{i}|<\varepsilon^{2} |t_{i}-t_{0}|<\varepsilon r_i$, $|df(x_{0},t_{i}')-df(x_{0},t_{0})|<\varepsilon$ and $(x_{0},t_{i}')\in E_{k}^{2}$. Thus if we define
\[ A_{i}=B_{M}(x_{0},1/2^{k_{i}}) \times (t_{i}'-r_{i},t_{i}'+r_{i})\]
then
\[ \nu \times \mathcal{L}^{1}(A_{i} \cap E_{k}^{1} \cap D(x_{0},t_{i}',\varepsilon)) \geq (2-\varepsilon) r_{i} \nu_{k_{i}}.\]

Notice $D(x_{0},t_i',\varepsilon) \subset D(x_{0},t_0,2\varepsilon)$. Since $G_{i} \subset R_{i}$ and $R_{i}$ is well approximated by $A_{i}$,
\begin{align*}
 \nu & \times \mathcal{L}^{1} (G_{i}\setminus(E_{k}^{1}\cap D(x_{0},t_{0},2\varepsilon)))\\
& \leq \nu \times \mathcal{L}^{1} (A_{i}\setminus (E_{k}^{1}\cap D(x_{0},t_{i}',\varepsilon))) + \nu \times \mathcal{L}^{1} (R_{i}\setminus A_{i})\\
& \leq  \varepsilon r_{i} \nu_{k_{i}} + \nu_{k_{i}} |t_{i}'-t_{i}|\\
& \leq  \varepsilon r_{i} \nu_{k_{i}} + \varepsilon r_{i} \nu_{k_{i}}\\
& =  \varepsilon \nu \times \mathcal{L}^{1}(R_{i}).
\end{align*}

Now fix $(x,t) \in G_{i}\cap E_{k}^{1} \cap D(x_{0},t_{0},2\varepsilon)$ and notice $|df(y,s)-df(x_{0},t_{0})|>3\varepsilon$ implies $(y,s) \not\in D(x,t,\varepsilon)$. We calculate,
\begin{align*}
|f & (x,t)-f(x_{0},t_{0})-df(x_{0},t_{0})(t-t_{0})| \\
& \leq \left| \int_{t_{0}}^{t_{i}} (df(x_{0},s)-df(x_{0},t_{0})) ds \right| + \left| \int_{t_{i}}^{t} (df(x,s)-df(x_{0},t_{0})) ds \right| \\
& \leq (2\mathrm{Lip} f +1)\varepsilon d_{p}((x_{0},t_{0}),(x,t)) + 3\varepsilon |t-t_{i}| + 2\varepsilon \mathrm{Lip} f |t-t_{i}|\\
&\leq (4\mathrm{Lip} f+4) \varepsilon d_{p}((x_{0},t_{0}),(x,t)). 
\end{align*}
The estimation of the first integral is the same as earlier. Estimation of the second uses the fact that $(x,t) \in E_{k}^{1}$ as for $(x_0,t_0)$ before.
\end{proof}

To conclude we estimate
\begin{align*}
 \nu & \times \mathcal{L}^{1} (B_{p}((x_{0},t_{0}),r)\cap L(x_{0},t_{0},\varepsilon)) \\
& \leq  \nu\times\mathcal{L}^{1} (B_{p}((x_{0},t_{0}),r)\setminus \cup_{i} G_{i}) + \sum_{i} \nu\times\mathcal{L}^{1} (G_{i}\cap L(x_{0},t_{0},\varepsilon))\\
& \leq  \nu \times \mathcal{L}^{1}(B_{p}((x_{0},t_{0}),r) \setminus \cup_{i} R_{i}) + \nu \times \mathcal{L}^{1} (\cup_{i} R_{i} \setminus \cup_{i} G_{i}) + \sum_{i} \varepsilon \nu \times \mathcal{L}^{1}(R_{i})\\
& \leq 7\varepsilon \nu\times\mathcal{L}^{1}(B_{p}((x_{0},t_{0}),r)).
\end{align*}

We have shown that given $\varepsilon > 0$ for almost every $(x_{0},t_{0}) \in M \times I$ there exists $R>0$ such that $0<r<R$ implies
\[ \nu \times \mathcal{L}^{1} (B_{p}((x_{0},t_{0}),r)\cap L(x_{0},t_{0},\varepsilon)) \leq 7\varepsilon \nu\times\mathcal{L}^{1}(B_{p}((x_{0},t_{0}),r)).\]
By taking a countable intersection of such points for $\varepsilon_n = 1/n$ we see that $f$ is approximately differentiable almost everywhere.
\end{proof}

\section{Further Discussion}

There are some related questions we can ask about approximate differentiability spaces and non doubling measures. In our example $\nu \times \mathcal{L}^{1}$ was still, at almost every point, doubling on arbitrarily small scales. That is, for almost every $(x_{0},t_{0}) \in M \times I$,
\[ \liminf_{r \downarrow 0} \frac{\nu \times \mathcal{L}^{1}(B_{p}((x_{0},t_{0}),4r))}{\nu \times \mathcal{L}^{1}(B_{p}((x_{0},t_{0}),r))} < \infty.\]
It is not clear whether there is an approximate differentiability space $X$ with measure $\mu$ that is, at almost every point, non doubling on all small scales. That is, for some enlargement factor $C$,
\[ \lim_{r \downarrow 0} \frac{\mu(B(x_{0},Cr))}{\mu(B(x_{0},r))} = \infty\]
for almost every $x \in X$.
If we adjust the definition of $M$ by splitting each island into a very large number of subislands and define a different equivalence relation on $M\times I$ so that at a jump level of order $k$ we identify corresponding points in distinct islands of level $k+1$ that are inside the same island of level $k$ then the measure $\nu \times \mathcal{L}^{1}$ is non doubling on all small scales. However, as we identify fewer points (only finitely many at each level) there seems no obvious way to use the density theorem in horizontal directions to construct paths joining points along which the derivative is approximately constant. Thus, it is not clear whether this alternate construction also gives an approximate differentiability space.

We can also consider an alternative definition of approximate differentiability space in which the approximate limit defining the derivative is defined in a Lipschitz invariant way. That is, in a complete separable metric space $X$ with locally finite measure $\mu$ we say $g\colon X \to \mathbb{R}$ has invariant approximate limit $l \in \mathbb{R}$ at $x_{0} \in X$ if for for every $C, \varepsilon > 0$,
\[ \lim_{r\downarrow 0} \frac{\mu \{x \in B(x_{0},Cr): |g(x)-l|>\varepsilon \}}{\mu(B(x_{0},r))}=0.\]
It is not clear whether there is any invariant approximate differentiability space in which the measure is non doubling. There seems no obvious way to construct such a space using density theorems and approximate continuity like before. One can however show the measure cannot be non doubling on all small scales. Precisely, for all $C>0$ and almost every $x_{0}\in X$,
\[ \liminf_{r \downarrow 0} \frac{\mu(B(x_{0},Cr))}{\mu(B(x_{0},r))} < \infty.\]
The proof of this involves using a contradiction argument to construct a Lipschitz function not invariantly approximately differentiable on a set of positive measure. The idea is find a fixed radius $r>0$ such that multiplying $r$ by a fixed factor $\eta >0$ greatly reduces the measure of the corresponding ball centred on most points. We then use a covering theorem to find a disjoint subcollection of the balls with radius $r$ so that each ball in the original collection meets a ball in the subcollection in such a way that the corresponding reduced balls have comparable measure. We may build cones on the reduced balls in the disjoint subcollection, and later repeat the process at smaller scales, as collectively they have small measure. Now around most centres $c$ we can find a ball $B(c,\eta r)$ for which the expanded ball $B(c, r)$ contains a ball $B(c',\eta r)$ on which a cone is defined and $B(c,\eta r)$ and $B(c',\eta r)$ have comparable measure. By either adding cones or not at smaller and smaller scales, similar to in the proof porous sets in differentiability spaces are null, we construct a Lipschitz function which is not invariantly approximately differentiable on a set of positive measure.

\bibliographystyle{amsplain}

\end{document}